\newcommand{\no}[1]{#1}
\renewcommand{\no}[1]{}
\renewcommand{\Delta}{\upDelta}}
\date{\today}
\def\nm{\noalign{\medskip}}
\numberwithin{algorithm}{section}
\numberwithin{figure}{section}
\newtheorem{lemma}{Lemma}[section]
\newtheorem{remark}{Remark}[section]
\newtheorem{definition}{Definition}[section]
\newtheorem{corollary}{Corollary}[section]
\newtheorem{proposition}{Proposition}[section]
\newtheorem{theorem}{Theorem}[section]
\newtheorem{example}{Example}[section]
\def\R{\mathbb{R}}
\def\N{\mathbb{N}}
\def\la{\langle}
\def\ra{\rangle}
\newcommand{\be}{\begin{equation}}
\newcommand{\ee}{\end{equation}}
\newcommand{\ba}{\begin{array}}
\newcommand{\ea}{\end{array}}
\newcommand{\p}{\partial}
\newcommand{\bea}{\begin{eqnarray*}}
\newcommand{\eea}{\end{eqnarray*}}
\newcommand{\bean}{\begin{eqnarray}}
\newcommand{\eean}{\end{eqnarray}}
\def\tilde{\widetilde}
\def\cydot{\leavevmode\raise.4ex\hbox{.}}
\title[]{Identification of an algebraic domain in two dimensions from a finite number 
of its  generalized polarization tensors }
\author{Habib Ammari }
\address{Habib Ammari, Department of Mathematics, ETH Z\"urich, 
R\"amistrasse 101, 8092 Z\"urich, Switzerland}
\email{habib.ammari@math.ethz.ch}
\author{Mihai Putinar}
\address{Mihai Putinar, Department of Mathematics, University of California at Santa Barbara, Santa Barbara, CA 93106-3080, USA,  and School of Mathematics \& Statistics, Newcastle University Newcastle upon Tyne, NE1 7RU, 
United Kingdom}
\email{mputinar@math.ucsb.edu}
\author{Andries Steenkamp} 
\address{Andries Steenkamp, Department of Mathematics, ETH Z\"urich, 
R\"amistrasse 101, 8092 Z\"urich, Switzerland}
\email{andriess@student.ethz.ch}
\author{Faouzi Triki}
\address{Faouzi Triki,  Laboratoire Jean Kuntzmann,  UMR CNRS 5224, 
Universit\'e  Grenoble-Alpes, 700 Avenue Centrale,
38401 Saint-Martin-d'H\`eres, France}
\email{faouzi.triki@univ-grenoble-alpes.fr}
\thanks{
The work of Faouzi Triki was supported in part by the
 grant ANR-17-CE40-0029 of the French National Research Agency ANR (project MultiOnde),
 and the LabEx PERSYVAL-Lab (ANR-11-LABX- 0025-01).}
\date{\today}
\subjclass{Primary: 35R30, 35C20.}
\keywords{inverse problems,  generalized polarization tensors, algebraic domains, shape classification}
\begin{document}

\begin{abstract}
This paper aims at studying how finitely many generalized polarization tensors of an algebraic domain
can be used to determine its shape. Precisely,  given a planar set with real algebraic boundary,  it  is 
shown that the minimal polynomial with real coefficients vanishing on the boundary can be identified
as the generator of a one dimensional kernel of a matrix whose entries are obtained from a finite  number
of generalized  polarization tensors. The size of the matrix depends  polynomially on the degree of the boundary of the algebraic domain. The density with respect to Hausdorff distance of algebraic domains among all bounded domains invites to extend via approximation our reconstruction procedure beyond its natural context. Based on this, a new algorithm for shape recognition/classification is proposed with some strong hints about its efficiency. 
\end{abstract}

\maketitle
\tableofcontents 
%%%%%%%%%%%%%%%%%%%%%%%%%%%%%%%%%%%%%%%%%%%%%%%%%%%%%%%%

%%%%%%%%%%%%%%%%%%%%
\section{Introduction}\label{sec0}
Given a conductivity contrast, the generalized polarization tensors (GPTs)  of a bounded Lipschitz domain are an infinite sequence of
tensors.  The GPTs form the basic building blocks for the far-field behavior of  the electric potential.
 Recently, many works have shown that  the GPTs
can be used to efficiently recover geometrical properties of the underlying shape.  In fact the  knowledge of the full set of 
GPTs  determines uniquely  the shape of the domain as proved
 in \cite{ak03}.   When the domain is transformed by a rigid motion or a dilation,
the corresponding GPTs change according to certain rules. It is
possible to construct as combinations of GPTs invariants under
these transformations. This property makes GPTs suitable for the
dictionary matching problem~\cite{Hu, AmmariBoulierGarnierJingKangWang}. 
The GPTs have also been used in various areas of applications 
such as imaging, cloaking, and plasmonics. We refer the reader to \cite{AmmariKangb1, ak11, cloaking, plasmonics, kang, milton, TrikiVauthrin} and 
 references therein for further information about  these applications. 

Since the GPTs appear naturally in imaging a small conductivity inclusion from 
boundary potential measurements,  the amount of information about the shape of the inclusion  
encoded in the first tensors is richer than any other geometrical quantities.
Recent numerical studies \cite{AGKLY, AKLZ} show that
by using only the first few terms of GPTs reasonable approximation of the true 
shape can be recovered. Complete geometric identification of a conductivity
 inclusion from the knowledge  of its first  GPT  is  known to be  possible for ellipsoid 
 shapes. In fact if the contrast is given, only the first polarization tensor is needed to retrieve the major and minor axis of an ellipse~\cite{AmmariKangb1}. For arbitrary shapes, it is proposed to approach them using  
 ellipse-equivalent identification. This consists simply of determining the shape of an ellipse with the same first polarization tensor as that of the targeted  inclusion~\cite{AmmariKangb1}.
The results of this approach are quite surprising since the recovered equivalent ellipse 
seems to hold much more  information on the shape than anticipated. For example, the equivalent ellipse contains more knowledge than the first two or three Dirichlet Laplacian eigenvalues of the inclusion. An interesting question  is whether one could  recover other shapes of  an inclusion from  the knowledge  of  a finite number of its GPTs. In view of \cite{jmpa},  inclusions with algebraic shapes represent  good candidates for such an identification problem. To specify our terminology, an inclusion has algebraic shape if it is a bounded open subset of Euclidean space whose boundary is real algebraic, i.e., contained in the zero set of finitely many polynomials.

In this paper,  we are interested in the inverse problem of 
recovering the shape of an algebraic inclusion given a finite number of its GPTs. We consider shapes unique up to rigid motions, that is orthogonal transformations.

The  paper is organized as follows. In Section \ref{sec1} we introduce the notion of 
GPTs of an inclusion and their relation to far-field expansion of the 
fields associated to a piecewise constant conductivity. The inverse problem in question is stated in Section  \ref{sec2}, where a review of recent results  in the recovery  of the shape 
of an inclusion  from the knowledge of all the GPTs is also given. Section \ref{sec3}  is dedicated to an introduction to real algebraic domains. Some basic notions of real algebraic geometry are recalled here.  Our main identification
result is stated  in Theorem \ref{main}. The detailed proof of the main result as well as  a uniqueness result are provided in  Section  \ref{sec4}.    Section \ref{sec6} is  devoted  to the generalization of 
the concept of  ellipse-equivalent approach to higher-order GPTs. This generalization takes advantage of the density of algebraic domains in the set of smooth inclusions. 
We apply the main result in Section \ref{sec6} by constructing a shape recognition algorithm and demonstrating its optimality by means  of a few well chosen examples.
The paper is concluded with some discussions in Section \ref{sec7}.

\section{Generalized polarization tensors}\label{sec1}
%%%%%%%%%%%%%%%%%%%%%%%%%%%%%%%%%%%%%%%%%%%%%%%%%%%

Let $D$ be a bounded  Lipschitz domain in $\R^2,$ of size of order one. Assume that its boundary 
 $\partial D$ contains the origin. Throughout this paper, we use standard notation concerning Sobolev spaces. For a density 
$\phi \in H^{-1/2}(\partial D)$,  define the  Neumann-Poincar\'e operator (NPO):  
$\mathcal{K}^*_{\p D}: \, H^{-1/2}(\partial D) \rightarrow H^{-1/2}(\partial D), $
by
\bea \label{introkd2}
\mathcal{K}^*_{\p D} [\phi] (x) = \frac{1}{2\pi} \mbox{p.v.} \, \int_{\p D}
\frac{\la x -y, \nu(x) \ra}{|x-y|^2} \phi(y)\,d\sigma(y), \quad x \in \p D, 
\eea
where $\mbox{p.v.}$ denotes the principal value, $\nu(x)$ is the outward unit normal to $\partial D$ at $x \in \partial D, $ and 
$\la\,, \,\ra$ denotes the scalar product in $\R^2$.

% Throughout this article we deal with piecewise smooth boundaries $\p D$.
%The precise meaning of the line integrals as above is restrict the integration domain to the smooth parts of $\p D$, where
%the arc length element $d\sigma$, associated to the imbedding of the curve in euclidean space, is non-ambiguous.

The spectral properties of the Neumann-Poincar\'e operator 
have proven interesting in several contexts~\cite{AndoKang,AmmariCiraoloKangLeeMilton,BDT,BonnetierTriki,BonnetierTriki_2}.
Due to Plemelj-Calder\'on identity  and energy estimates,  the spectrum of 
${\mathcal K}^*_{\p D}$  is real~\cite{AmmariKangb1,KhavisonPutinarShapiro}.
When $D$ is smooth (with ${\mathcal C}^{1,\alpha}$ boundary), 
${\mathcal K}^*_{\p D}$ is compact, hence its spectrum consists of a sequence of
eigenvalues that accumulates to $0$~\cite{KhavisonPutinarShapiro}. 
When $D$ is Lipschitz, the following proposition characterizes  the resolvent set  $\rho({\mathcal K}^*_{\p D})$
of the NPO~\cite{seo, AmmariKangb1}.
%%%%%%%%%%%%%%%%%%%%%%%%%%
\begin{proposition} \label{resolvent} We have
$\mathbb C\setminus 
(-1/2, 1/2] \subset \rho({\mathcal K}^*_{\p D})$. Moreover, if $|\lambda | 
\geq 1/2$ , then  $(\lambda I - \mathcal{K}^*_{\p D}) $ is invertible on 
$H_0^{-1/2}(\p D):= \{f \in H^{-1/2}(\p D): \langle f, 1\rangle_{-1/2,1/2} =0\}$. Here, $\langle \;, \; \rangle_{-1/2,1/2}$ denotes the duality pairing between $H^{-1/2}(\partial D)$ and $H^{1/2}(\partial D)$.  
\end{proposition}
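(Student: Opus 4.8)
The plan is to establish Proposition \ref{resolvent} by combining the classical mapping properties of the single and double layer potentials with the Plemelj--Calder\'on jump relations and a standard energy (Rellich-type) argument. First I would recall the representation formula: for $\phi\in H^{-1/2}(\p D)$ the single layer potential $\mathcal S_{\p D}[\phi]$ is harmonic in $\R^2\setminus\p D$, continuous across $\p D$, and satisfies the trace identities $\p_\nu \mathcal S_{\p D}[\phi]\big|_{\pm} = (\pm \tfrac12 I + \mathcal K^*_{\p D})[\phi]$, where $\pm$ denotes the exterior/interior normal derivative. The strategy is then: to show $\lambda I - \mathcal K^*_{\p D}$ is invertible on the relevant space, it suffices to show it is injective (since $\mathcal K^*_{\p D}$ is bounded and, by the symmetrization principle coming from Plemelj--Calder\'on, $\lambda I - \mathcal K^*_{\p D}$ is Fredholm of index zero on $H^{-1/2}$ for $|\lambda|\ge 1/2$, so injectivity forces bijectivity).

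The core step is the injectivity/energy estimate. Suppose $(\lambda I - \mathcal K^*_{\p D})[\phi]=0$. Let $u=\mathcal S_{\p D}[\phi]$, with interior part $u_i$ (on $D$) and exterior part $u_e$ (on $\R^2\setminus \bar D$), decaying at infinity if $\langle\phi,1\rangle=0$. From the jump relations, $(\lambda-\tfrac12)\,\p_\nu u_i = (\lambda+\tfrac12)\,\p_\nu u_e$ on $\p D$; combined with continuity of $u$ across $\p D$ and Green's identities $\int_D |\grad u_i|^2 = \int_{\p D} u\,\p_\nu u_i$ and $\int_{\R^2\setminus\bar D}|\grad u_e|^2 = -\int_{\p D} u\,\p_\nu u_e$, one derives a relation of the form
\be
(\lambda-\tfrac12)\int_{\R^2\setminus\bar D}|\grad u_e|^2 + (\lambda+\tfrac12)\int_D|\grad u_i|^2 = 0.
\ee
For $|\lambda|\ge 1/2$ (and $\lambda\notin\{\pm 1/2\}$ interior to the claimed resolvent set; the endpoint $\lambda=1/2$ on $H_0^{-1/2}$ needs the separate remark below) the two coefficients $(\lambda\mp\tfrac12)$ have the same sign or one vanishes, forcing $\grad u_i=0$ and $\grad u_e=0$, hence $u_e\equiv 0$ (decay), $u_i\equiv$ const, and then $\phi = \p_\nu u_i - \p_\nu u_e = 0$. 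This also handles the boundary point $\lambda=1/2$ restricted to $H_0^{-1/2}(\p D)$, where the decay of $u_e$ is guaranteed and the coefficient $(\lambda+\tfrac12)=1$ still kills $\grad u_i$. The inclusion $\mathbb C\setminus(-1/2,1/2]\subset\rho(\mathcal K^*_{\p D})$ then follows since the spectrum is real (cited above) and contained in $[-1/2,1/2]$, and the only possibly-remaining point $\lambda=1/2$ is excluded from the resolvent set on the full space but becomes regular on the codimension-one subspace.

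I expect the main obstacle to be the Lipschitz regularity: the layer potential mapping properties, the principal-value definition of $\mathcal K^*_{\p D}$, and especially the Rellich identity justifying the boundary integration by parts are all delicate on merely Lipschitz domains and rely on the deep results of Verchota and Coifman--McIntosh--Meyer on the $L^2$ boundedness of the Cauchy integral and the solvability of the Neumann problem. In a paper at this level these are invoked as black boxes (the references \cite{seo, AmmariKangb1} are cited precisely for this), so the ``proof'' here is really a matter of assembling the jump relations and the sign analysis correctly; the technical heavy lifting is imported. A secondary subtlety is bookkeeping the two cases $|\lambda|>1/2$ (invertibility on all of $H^{-1/2}(\p D)$) versus $\lambda=\pm 1/2$ (where one must restrict to $H_0^{-1/2}(\p D)$ and use the far-field decay), and making sure the Fredholm-of-index-zero reduction is legitimate, which again leans on the Plemelj--Calder\'on symmetrization making $\mathcal K^*_{\p D}$ self-adjoint with respect to an equivalent inner product on $H_0^{-1/2}(\p D)$.
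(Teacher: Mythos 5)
The paper does not prove this proposition at all: it is quoted as a known result and attributed to the references \cite{seo, AmmariKangb1}, so there is no in-paper argument to compare against. Your sketch is essentially the standard proof from those references (symmetrization via the Plemelj--Calder\'on identity to reduce invertibility to injectivity, then the jump relations plus Green's identities to get the energy identity and kill the kernel), and the displayed energy identity
$(\lambda-\tfrac12)\int_{\R^2\setminus\bar D}|\grad u_e|^2+(\lambda+\tfrac12)\int_D|\grad u_i|^2=0$
is the correct one. Two small slips worth fixing: the intermediate trace relation should read $(\lambda+\tfrac12)\,\p_\nu u_i=(\lambda-\tfrac12)\,\p_\nu u_e$ (you swapped the coefficients, since $\p_\nu u_e=(\lambda+\tfrac12)\phi$ and $\p_\nu u_i=(\lambda-\tfrac12)\phi$), and for nonreal $\lambda$ the phrase ``same sign'' should be replaced by taking imaginary and real parts of the identity. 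At the endpoint $\lambda=1/2$ on $H_0^{-1/2}(\p D)$ you also need the zero-flux observation $\int_{\p D}\p_\nu u_e\,d\sigma=\langle\phi,1\rangle=0$ to rule out the logarithmic term and conclude $u_e\equiv 0$; with that, the argument closes. The genuinely hard analytic inputs on Lipschitz domains (boundedness of the layer potentials, the Rellich/Verchota estimates justifying the Fredholm or numerical-range reduction) are, as you note, correctly treated as imported black boxes, exactly as the paper itself does.
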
 
%%%%%%%%%%%%%%%%

For $|\lambda|>1/2$ and  a multi-index $\alpha = (\alpha_1, \alpha_2) \in \mathbb N^2$, 
 where $\mathbb N$ is the set of all
positive integers, define $\phi_\alpha$ by

\bea
\phi_\alpha(y) :=(\lambda I- \mathcal{K}^*_{\p D})^{-1} \left [\nu(x)\cdot\nabla x^\alpha\right ](y), \quad 
y \in \p D.
\eea

Here and throughout this paper, we use the conventional notation: $x^\alpha= x_1^{\alpha_1}
x_2^{\alpha_2},\; \alpha= (\alpha_1, \alpha_2) \in \N^2$, and $|\alpha| = \alpha_1+\alpha_2$. 
We also use the graded lexicographic order: $\alpha,\, \beta \in \N^2$ verifies
$\alpha \leq \beta$ if $|\alpha| < |\beta|$, or, if $|\alpha| = |\beta|$, then $\alpha_1\leq \beta_1$  or $\alpha_1=\beta_1$ and  $\alpha_2\leq \beta_2.$

The GPTs $M_{\alpha\beta}$ for $\alpha, \beta\in \N^2 \; (|\alpha|, |\beta| \geq 1)$, associated with the parameter $\lambda$ and the domain $D$ are defined by
 \bean  \label{gpt1}
 M_{\alpha\beta}(\lambda, D) :=  \int_{\p D} y^\beta \phi_\alpha(y)\,d\sigma(y).
 \eean

As we said before,  the GPTs  are tensors  that appear naturally  in the asymptotic expansion of  the electrical potential in the presence of a small inclusion $D$ of conductivity contrast $0\leq k \not=1 \leq +\infty$. The parameter $\lambda$ is related to the conductivity  $k$ via the relation
\bea
\lambda = \frac{k+1}{2(k-1)}.
\eea
The fact that $0 \leq k \leq +\infty$ implies that $|\lambda| \geq 1/2$, and hence $(\lambda I- \mathcal{K}^*_{\p D}) $ is invertible on 
$H_0^{-1/2}(\p D)$.  Assume now that the distribution of the conductivity in $\R^2$ is given by
\bea
\Upsilon=k \chi(D) + \chi(\R^2 \setminus \overline{D}),
 \eea
where $\chi$ denotes the indicator function. For a given harmonic function $h$ in $\R^2$,
 we consider the following transmission problem:
\bea \label{trans}
\left\{
\begin{array}{ll}
\nabla \cdot \Upsilon \nabla u = 0 \quad &\mbox{in } \R^2, \\
u(x)-h(x) = O(|x|^{-1}) \quad &\mbox{as } |x| \to \infty.
\end{array}
\right.
\eea
The electric potential  $u$  has the following integral representation (see, for instance,  \cite{AmmariKangb1})
\bea
u(x) = h(x) +\mathcal S_{\p D}(\lambda I- \mathcal{K}^*_{\p D})^{-1} \left[ \p_{\nu} h(x)\right ],\quad 
x\in \R^2,
\eea
where $\mathcal S_{\p D}: H^{-1/2}(\p D) \rightarrow H^{1/2}(\p D)$ is the single layer potential given
by
\bea
\mathcal{S}_{\p D} [\phi] (x) = \frac{1}{2\pi} \int_{\p D}\Gamma (x-y)
\phi(y)\,d\sigma(y), \quad x \in \R^2.
\eea
Here, $\Gamma$ is the fundamental solution of the Laplacian,
\bea
\Gamma(x):= \frac{1}{2\pi}\ln |x|.
\eea
It possesses  the following Taylor expansion 
\bea
\Gamma (x-y) = \sum_{|\alpha|=0}^{\infty}\frac{(-1)^{|\alpha|}}{\alpha!}\p^\alpha 
\Gamma(x)y^\alpha, \quad  y\in \overline D, \quad |x| \rightarrow +\infty,
\eea
where  $\partial_\alpha = \partial^{\alpha_1}_{x_1}\partial^{\alpha_2}_{x_2},$ and
$\alpha! = \alpha_1! \alpha_2!$ for $\alpha= (\alpha_1, \alpha_2) \in \N^2$.

Then, the far-field perturbation of the voltage potential created by  $D$ is given by  \cite{AmmariKangb1}
\bean \label{asym}
u(x)-h(x) = \sum_{|\alpha|, |\beta| =1}^{\infty}\frac{(-1)^{|\alpha|}}{\alpha!\beta!}\p^\alpha \Gamma(x)
M_{\alpha\beta}\p^\beta h(0) \qquad \textrm{as   }  |x| \rightarrow +\infty.
\eean

From the asymptotic expansion \eqref{asym},  we deduce that the knowledge of
 $M_{\alpha\beta}$ for $\alpha, \beta\in \N^2 \; (|\alpha|, |\beta| \geq 1)$ 
is equivalent to knowing the far-field responses of the inclusion for all harmonic excitations.
%%%%%%%%%%%%%%%%%%%%%%%%%%%%%%%%%%%%%%%%%%%%%%%%%%%%%%%%
\section{Shape reconstruction problem }\label{sec2}
%%%%%%%%%%%%%%%%%%%%%%%%%%%%%%%%%%%%%%%%%%%%%%%%%%%
In this section,  a brief review of recent results  in GPT based inclusion shape recovery is given. We first introduce the harmonic combinations of the GPTs. Positivity and symmetry properties of the GPTs are proved using their harmonic combinations  \cite{AmmariKangb1}. 
 A harmonic combination of the  GPTs $M_{\alpha \beta}$ is 
  \bea
 \sum_{\alpha, \beta}a_\alpha b_\beta  M_{\alpha \beta},
 \eea where  $\sum_{\alpha}a_\alpha x^\alpha$ and  $\sum_{\beta} b_\beta x^\beta$ are real harmonic polynomials. We further call such $a_\alpha$ and $b_\beta$  harmonic coefficients. For example, if $a_\alpha$ and $b_\beta$ are any two  harmonic coefficients, we have the following symmetry property:
 \bea
 \sum_{\alpha, \beta}a_\alpha b_\beta  M_{\alpha \beta} = \sum_{\alpha, \beta}a_\alpha b_\beta  M_{\beta \alpha}.
 \eea
The  following uniqueness result has been proved in  \cite{ak03}.
%%%%%%%%%%%%%%%%%%%%%%%
\begin{theorem}\label{thm1}
If all harmonic combinations of GPTs of two domains $D$ and $\widetilde D$
with parameters $\lambda$ and $\tilde \lambda$, are identical, that is
\bea
 \sum_{\alpha, \beta}a_\alpha b_\beta  M_{\alpha \beta}(\lambda, D) = \sum_{\alpha, \beta}a_\alpha b_\beta  M_{\alpha \beta}(\tilde \lambda, \widetilde D),
 \eea
for all pairs  $a_\alpha$ and $b_\beta$ of harmonic coefficients, then $D= \widetilde D$ and 
 $\lambda= \tilde \lambda$.
\end{theorem}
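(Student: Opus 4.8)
The plan is to translate the algebraic hypothesis on harmonic combinations of GPTs into the coincidence of the exterior electric fields produced by all harmonic---and then, by density, all point---sources, to read off the geometry of $D$ from the behaviour of those fields near $\partial D$, and to recover the contrast from the spectral positivity of the Neumann--Poincar\'e operator. Concretely: for an entire harmonic function $h$ let $u_D(\,\cdot\,;h)$ be the solution of \eqref{trans} with parameter $\lambda$. Inspecting the far-field expansion \eqref{asym}, the coefficients of the multipole expansion at infinity of the decaying harmonic function $u_D(\,\cdot\,;h)-h$ are, after pairing with an arbitrary harmonic coefficient vector $(a_\alpha)$, precisely the harmonic combinations $\sum_{\alpha,\beta}a_\alpha b_\beta M_{\alpha\beta}(\lambda,D)$, where $(b_\beta)$ are the coefficients of $h$; so the totality of harmonic combinations of GPTs of $D$ is the same data as the germs at infinity of the functions $u_D(\,\cdot\,;h)-h$ over all entire harmonic $h$. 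The hypothesis thus means that $u_D(\,\cdot\,;h)-h$ and $u_{\widetilde D}(\,\cdot\,;h)-h$ have the same germ at infinity for every entire harmonic $h$, and since both are harmonic and decay at infinity on the unbounded connected component $\Omega$ of $\R^2\setminus\overline{D\cup\widetilde D}$, unique continuation gives $u_D(\,\cdot\,;h)=u_{\widetilde D}(\,\cdot\,;h)$ throughout $\Omega$ for all such $h$. A Runge-type approximation (harmonic polynomials are dense, for locally uniform convergence, among functions harmonic near $\overline{D\cup\widetilde D}$) together with the continuity of $h\mapsto u_D(\,\cdot\,;h)$ then extends this identity to every $h$ harmonic in a neighbourhood of $\overline{D\cup\widetilde D}$, in particular to $h=\Gamma(\,\cdot\,-z)$ for $z$ in the unbounded region; using the symmetry of the Green function $N_D(x,z):=u_D(x;\Gamma(\,\cdot\,-z))$ of \eqref{trans}, this yields $N_D(x,z)=N_{\widetilde D}(x,z)$ for all $x,z\in\Omega$.

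The next task is to conclude $D=\widetilde D$, and this is where I expect the main difficulty to lie. Argue by contradiction: if $D\neq\widetilde D$ then, after possibly interchanging the two domains, there is a point $p\in\partial D\cap\overline\Omega$ with $p\notin\overline{\widetilde D}$. Fix $x_0\in\Omega$ away from $p$ and let $z\to p$ from within $\Omega$. Since the pole $z$ stays at positive distance from $\overline{\widetilde D}$, the function $u_{\widetilde D}(\,\cdot\,;\Gamma(\,\cdot\,-z))-\Gamma(\,\cdot\,-z)=\mathcal S_{\partial\widetilde D}[(\tilde\lambda I-\mathcal K^*_{\partial\widetilde D})^{-1}[\partial_\nu\Gamma(\,\cdot\,-z)|_{\partial\widetilde D}]]$ stays uniformly bounded in Dirichlet energy and uniformly smooth near $p$. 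For $D$, on the contrary, the density $(\lambda I-\mathcal K^*_{\partial D})^{-1}[\partial_\nu\Gamma(\,\cdot\,-z)|_{\partial D}]$ blows up in $H^{-1/2}(\partial D)$ as $z\to p\in\partial D$, so $u_D(\,\cdot\,;\Gamma(\,\cdot\,-z))-\Gamma(\,\cdot\,-z)=\mathcal S_{\partial D}[(\lambda I-\mathcal K^*_{\partial D})^{-1}[\partial_\nu\Gamma(\,\cdot\,-z)|_{\partial D}]]$ has Dirichlet energy tending to infinity, a blow-up that is necessarily concentrated near $p$. But on $\Omega$ this function equals the $\widetilde D$ one, hence is bounded near $p$ on the exterior side of $\partial D$, and its trace on $\partial D$ near $p$ is uniformly bounded (the $\Gamma(\,\cdot\,-z)$ singularities cancel on $\partial D$); combining this with the transmission conditions at $\partial D$ and an interior maximum-principle/Schauder estimate forces the energy to remain bounded near $p$ on the interior side as well, contradicting the blow-up. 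Hence $D=\widetilde D$. Alternatively---and this is essentially the same mechanism---the Green-function identity above is the equality of the Dirichlet-to-Neumann maps of $\nabla\cdot\Upsilon\nabla u=0$ and $\nabla\cdot\widetilde\Upsilon\nabla u=0$ on a large ball containing the two domains, so one may appeal to the classical uniqueness of the inverse conductivity problem with a two-phase coefficient.

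With $D=\widetilde D$ established, the hypothesis reduces to $\int_{\partial D}h_2\,(\lambda I-\mathcal K^*_{\partial D})^{-1}[\partial_\nu h_1]\,d\sigma=\int_{\partial D}h_2\,(\tilde\lambda I-\mathcal K^*_{\partial D})^{-1}[\partial_\nu h_1]\,d\sigma$ for all harmonic polynomials $h_1,h_2$. By the resolvent identity the difference of the two sides equals $(\tilde\lambda-\lambda)\int_{\partial D}h_2\,(\lambda I-\mathcal K^*_{\partial D})^{-1}(\tilde\lambda I-\mathcal K^*_{\partial D})^{-1}[\partial_\nu h_1]\,d\sigma$. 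Taking $h_1=h_2=h$ non-constant (so $\partial_\nu h\in H_0^{-1/2}(\partial D)\setminus\{0\}$), and using that $\mathcal K^*_{\partial D}$ is self-adjoint for the positive bilinear pairing $-\langle\mathcal S_{\partial D}\,\cdot\,,\cdot\,\rangle$ on $H_0^{-1/2}(\partial D)$ and that $\sigma(\mathcal K^*_{\partial D})\subset(-1/2,1/2]$ by Proposition~\ref{resolvent}---so that both $\lambda I-\mathcal K^*_{\partial D}$ and $\tilde\lambda I-\mathcal K^*_{\partial D}$ are definite and their product sign-definite---one concludes, exactly as in the positivity estimates for harmonic combinations of GPTs in \cite{AmmariKangb1}, that the integral is nonzero. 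Hence $\tilde\lambda=\lambda$, and since $k\mapsto\frac{k+1}{2(k-1)}$ is injective on $[0,+\infty]\setminus\{1\}$, the contrasts agree too.

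The genuinely delicate step is the identification of $D$. A purely local comparison of $u_D$ and $u_{\widetilde D}$ near a boundary point is inconclusive, since the transmission conditions are locally self-consistent; one must instead use the whole family of sources together with a quantitative singularity (or unique-continuation) estimate up to $\partial D$, and in the Lipschitz category the latter requires the usual care with boundary regularity. In effect, this step is the uniqueness of the inverse conductivity problem for a two-phase medium.
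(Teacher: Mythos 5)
Your proposal is correct and follows essentially the same route as the proof the paper relies on: Theorem~\ref{thm1} is deferred to \cite{ak03}, whose argument is exactly your chain --- harmonic combinations of GPTs $\Leftrightarrow$ far-field data, hence equality of the exterior fields and of the Dirichlet-to-Neumann map of the two-phase conductivity on a large ball, then Isakov-type uniqueness for the inverse conductivity problem to get $D=\widetilde D$, and a positivity/monotonicity-in-$\lambda$ argument for the contrast. Your alternative ``blow-up near a boundary point'' argument is just the singular-solutions mechanism underlying that uniqueness theorem, so it does not constitute a genuinely different approach.
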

%%%%%%%%%%%%%%%%%%%
Theorem \ref{thm1} says that the full knowledge of (harmonic combinations of) GPTs determines  uniquely the 
domain $D$ and the parameter $\lambda$.

Recall that the first-order polarization tensor,  $M_{\alpha \beta}$ with $|\alpha|= |\beta| =1$,  of any given inclusion is a $2\times2$  real valued and symmetric matrix.  Remarking that the polarization tensors  produced 
by rotating  ellipses with size coincide with the set of  $2\times2$   real valued symmetric matrices,  
it is known that the first-order polarization tensor yields the equivalent ellipse (see for instance \cite{AmmariKangb1} and references therein). The equivalent ellipse of $D$ is the ellipse with the same first-order polarization tensor as $D$. However, it is not known explicitly what kind of information on $D$ and $\lambda$ the higher-order GPTs carry. The purpose of this work is to study the possibility of using higher-order GPTs for  shape description. Our idea is based on first deriving a set of dense 
domains that can be identified from finitely many GPTs. We show that good candidates for
such a dictionary  is  the class $\mathcal G$ of  algebraic domains of size one.   Then, by approximating the target
domain using a sequence of algebraic domains in $\mathcal G$ we obtain a powerful tool for 
describing the shape of inclusions if  only a finite number of  GPTs is available. Next, we introduce the concept 
of real algebraic domains.
%%%%%%%%%%%%%%%%%%%%%%%%%%%%
%%%%%%%%%%%%%%%%%%%%%%%%%%%%%%%%%%%%%%%%%%%%%%
\section{Real algebraic domains}\label{sec3}
In the present section we consider the class of bounded open subsets of Euclidean space ${\mathbf R}^2$ with real algebraic 
boundary.  We adopt the following definition.

\begin{definition} \label{alg} An open set $G$ in ${\mathbf R}^2$ is called real algebraic if there exists a finite number of real coefficient polynomials 
$g_i(x), i=1,\cdots, m$ such that 
\bea
\p G \subset V:=\{x\in \R^2\;: g_1(x)= \cdots = g_m(x) =0\}.
\eea
 \end{definition}
 
The ellipse is a simple example of an algebraic domain, since its general boundary coincides with
the zero set of the quadratic polynomial function \[g(x)=  \sum_{|\alpha| \leq 2}g_\alpha x^\alpha \]
for given real coefficients $(g_\alpha)_{{|\alpha| \leq 2}}$ and proper signs in the top degree part.

We further denote by $\mathcal G$ the collection of bounded algebraic domains and {\it of size of order one}. The differential structure of the boundary $\p G$ is then well known:  
 it consists of algebraic arcs joining finitely many singular points, see for instance \cite{delaPuente}.
 
 It is tempting to also impose
 connectedness of the respective sets, but this constraint is not accessible by the elementary linear algebra tools we 
 develop in the present note, so we drop it. However, we call "domains" all elements $G \in \mathcal G$.
 
Following \cite{lp} we focus on a particular class of algebraic domains which are better adapted to the uniqueness 
and stability of the shape inverse quest. Let
 
\bean \label{set}
\mathcal G^* := \left \{G \in \mathcal  G:  G = \textrm{int}\, \overline G  \right \}.
\eean
 An element of $G^\ast$ is called an {\it admissible domain}, although it may not be connected.
 
The assumption that  $G = \textrm{int}\, \overline G$ implies that $G$ contains no slits
or $\p G$ does not have isolated points.
If $G\in \mathcal G^*$, the algebraic dimension of $\p G$ is one, and the ideal associated to it is 
principal. To be more precise, $\p G$ is a finite union of irreducible algebraic sets $X_j, \ j \in J,$ of dimension one each.
The reduced ideal associated to every $X_j$ is principal:
$$ I(X_j) = (P_j), \ \ \ j \in J$$
for instance  see \cite[Theorem 4.5.1]{bcr}. We assume that each $P_j$ is indefinite, i.e. it changes sign when crossing $X_j$.
Therefore one can consider the 
polynomial $g = \prod_{j \in J} P_j$, vanishing of the first-order on  $\p G$, that is
$|\nabla g| \not= 0$ on the regular locus of $\p G$. According to the real version of Study's lemma (cf. Theorem 12 in \cite
{delaPuente}) every polynomial vanishing on $\partial G$ is a multiple of $g$, that is 
$I(\p G) = (g)$. We define the degree of $\p G$ as the degree of 
the generator $g$ of the ideal $I(\p G)$. For a thorough discussion of the reduced ideal of a real algebraic surface
in ${\mathbf R}^d$, we refer to \cite{DuboisEfroymson}.

In the sequel, we denote by $g(x)$ the single polynomial vanishing  on $\p G$ which is the
generator of $I(\p G)$ and satisfying the following normalization condition 
$g_{\alpha^*} =1$, where $\alpha^*= \max_{g_\alpha\not=0} \alpha$.

Assume  $G\in \mathcal G^*$. If  the degree $d$ of $\p G$ and moments (up to order 3d) of the 
Lebesgue measure on $G$ are known, then as shown in \cite{lp}, the coefficients of 
 $g$  of degree $d$ that vanishes on $\p G$ are uniquely determined (up to a constant). More precisely,  
 it is shown in \cite{lp} that $g$  is the generator of a one-dimensional kernel of a  matrix whose entries are 
 obtained from moments of the Lebesgue supported measure by $G$. That is, only finitely many  
 moments (up to order 3d) are needed  to recover the minimal degree polynomial vanishing on $G$. It turns out that computing $g$ reduces
  to a solving a system of linear equations.
  
We stress that the main result contained in the present article identifies  the minimal degree polynomial $g$, and not the exact
  boundary of the admissible domain $G$. To give a simple example,  consider the defining equation of the boundary
  \begin{equation} \label{g4} g(x_1,x_2) = (x_1^3-x_1)(x_2^3-x_2).\end{equation}
  The following algebraic domains
   \begin{equation} \label{g4b} \begin{array}{l}  G_1 = \big\{ (x_1,x_2), x_1 >0 \ {\rm or} \  x_2 >0 \big\},\\
   \nm
   G_2 = \big \{ \{ (x_1,x_2), x_1 >0 \ {\rm or} \  x_2 <0 \big \},\\
   \nm
     G_3 = \big\{ \{ (x_1,x_2), x_1 <0 \ {\rm or} \  x_2 <0 \big \},\\
     \nm
       G_4 = \big\{ \{ (x_1,y_2), x_1 <0 \ {\rm or} \  x_2 >0 \big\},\end{array} \end{equation}
      are all admissible domains, sharing the same minimal degree defining function
      of the boundary.
      
 Even when restricting the class of domains to those possessing an irreducible boundary, we may encounter
 pathologies. Without recalling cumbersome details, Example 28 in \cite{delaPuente} produces a series of polynomials
 whose zero sets may contain curves and isolated points, but some of the isolated points are not irreducible components.
 The connectedness of $G$ is also tricky, as  for instance the intricate nature of the topology of the zero set of a lemniscate reveals:
 the curve
 \bean \label{lemniscate}
 \prod_{j=1}^k \big( (x_1 - a_j)^2 + (x_2-b_j)^2 \big) = r,
 \eean
 has $k$ distinct connected components for small values of $r>0$, where the poles $(a_j,b_j)$ are mutually distinct.
    
The main objective of our article, comparable to that in \cite{lp}, is to isolate a finite pool of domains (we may call them bounded "chambers") from which we can select the shape of $G$ and  further on determine the parameter  $\lambda$, both inferred from the knowledge of finitely many GPTs. It would be extremely interesting to unveil how the additional information encoded in the GPTs allows to select the correct
chamber among the many potential candidates.
    
 %%%%%%%%%%%%%%%%%%%%%%%%%%%
 \section{Main results}\label{sec4}
 %%%%%%%%%%%%%%%%%%%%%%%%%%%%
 
Let $\R[x]$ be the ring of polynomials in the variables $x = (x_1, x_2)$ and let $\R_N[x]$
be the vector space of polynomials of degree at most $N$ (whose dimension is 
 $r_N=(N+1)(N+2)/2$). For a polynomial function $p(x) \in \R_N[x]$, it has a unique expansion
in the canonical basis $x^\alpha, |\alpha| \leq N$ of $\R_N[x]$, that is, 
\bea
p(x) = \sum_{ |\alpha| \leq N} p_\alpha x^\alpha 
\eea
for some vector coefficients ${\bf p} = (p_\alpha) \in \R_{r_N}$. The following is the main result of the paper.
%%%%%%
%%%%%%%%%%%%%%%%%%%%%%%%%%%%%%
\begin{theorem} \label{main}
Let $G \in \mathcal G^*$  with $\p G$  Lipschitz of degree $d$,  and let  $g(x)=  \sum_{ |\alpha| \leq d}g_\alpha x^\alpha$, be a polynomial function  that vanishes  of the first-order  on $\p G$,  satisfying $I(\p G) = (g),$
  $g_{\alpha^*} =1$, and $g(0)= 0$, where $\alpha^*= \max_{g_\alpha\not=0} \alpha$. Then,  there exists a discrete set $\Sigma \subset \mathbb C_0:=\mathbb C\setminus[-1/2, 1,2]$, such that 
${\bf g} = (g_\alpha) \in \R_{r_d}$ is the unique solution to the following normalized linear system:
\bean \label{gg1}
{\bf p} = (p_\alpha) \in \R_{r_d}; \;\;\sum_{|\beta| \leq d}M_{\alpha\beta}(\lambda, G)p_\beta = 0 \;\;  \textrm{for  } |\alpha| \leq 2d;\;\;
p_{\alpha^*} =1, \;\;\alpha^* =\max_{p_\alpha\not=0} \alpha \; \textrm{  for } \lambda \in \mathbb C_0\setminus \Sigma. 
\eean

\end{theorem}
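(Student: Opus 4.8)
The plan is to split \eqref{gg1} into an easy membership statement and a rank computation, the latter performed ``at $\lambda=\infty$'' and propagated to generic $\lambda$ by analyticity.

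\textbf{Step 1: $\mathbf g$ always solves the homogeneous system.} Since $g(0)=0$ we have $g(y)=\sum_{1\le|\beta|\le d}g_\beta y^\beta$, so for every $\alpha$ with $1\le|\alpha|\le 2d$ and every $\lambda\in\mathbb C_0$,
\[
\sum_{1\le|\beta|\le d}M_{\alpha\beta}(\lambda,G)\,g_\beta=\int_{\p G}g(y)\,\phi_\alpha(y)\,d\sigma(y)=0,
\]
simply because $g\equiv0$ on $\p G$ (the integral being the $H^{1/2}$--$H^{-1/2}$ pairing). As $\mathbf g$ also meets the normalization $g_{\alpha^*}=1$, $\alpha^*=\max_{g_\alpha\ne0}\alpha$ by hypothesis, the whole content of the theorem is the \emph{uniqueness}: for $\lambda$ outside a discrete set the solution space of the homogeneous system should be exactly the line $\R\mathbf g$, so that the normalization isolates $\mathbf g$.

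\textbf{Step 2: reduction to the leading matrix via analyticity.} Write $M(\lambda)=\big(M_{\alpha\beta}(\lambda,G)\big)$ for the $(r_d-1)$-column matrix of the system (columns $1\le|\beta|\le d$, rows $1\le|\alpha|\le 2d$). By Proposition~\ref{resolvent}, $\mathbb C_0\subset\rho(\mathcal K^*_{\p G})$; since $\mathbb C_0$ is connected, the resolvent is operator-analytic there, and $M_{\alpha\beta}(\lambda,G)=\la y^\beta,(\lambda I-\mathcal K^*_{\p G})^{-1}[\nu\cdot\nabla x^\alpha]\ra$, each entry of $M(\lambda)$ is analytic on $\mathbb C_0$. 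Because $\mathbf g\ne0$ lies in every kernel, $\mathrm{rank}\,M(\lambda)\le r_d-2$ for all $\lambda$. If we can exhibit \emph{one} $\lambda$ where equality holds, then the $(r_d-2)\times(r_d-2)$ minors of $M(\lambda)$ are analytic and not identically zero, so their common zero locus $\Sigma$ is discrete, and for $\lambda\in\mathbb C_0\setminus\Sigma$ we get $\ker M(\lambda)=\R\mathbf g$ and hence $\mathbf p=\mathbf g$ after imposing $p_{\alpha^*}=1$. To produce such a $\lambda$ we pass to $|\lambda|\to\infty$: expanding $(\lambda I-\mathcal K^*_{\p G})^{-1}$ in a Neumann series (valid for large $|\lambda|$ since $\mathcal K^*_{\p G}$ is bounded on $H^{-1/2}(\p G)$) yields $\lambda M_{\alpha\beta}(\lambda,G)\to M^{(1)}_{\alpha\beta}:=\int_{\p G}y^\beta\,(\nu\cdot\nabla x^\alpha)\,d\sigma$, so by lower semicontinuity of the rank it suffices to prove $\ker M^{(1)}=\R\mathbf g$ on the space $\{p\in\R_d[x]:p(0)=0\}$.

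\textbf{Step 3: the kernel of $M^{(1)}$.} For $p$ with $p(0)=0$ one has $\sum_{1\le|\beta|\le d}M^{(1)}_{\alpha\beta}p_\beta=\int_{\p G}p\,(\nu\cdot\nabla x^\alpha)\,d\sigma$, so $\mathbf p\in\ker M^{(1)}$ means $\int_{\p G}p\,(\nu\cdot\nabla q)\,d\sigma=0$ for all $q\in\R_{2d}[x]$. Apply this to $q=g\,p\in\R_{2d}[x]$: on $\p G$, $g\equiv0$ forces $\nu\cdot\nabla(gp)=p\,\partial_\nu g$, hence
\[
\int_{\p G}p^2\,\partial_\nu g\,d\sigma=0.
\]
Now $\partial_\nu g$ has constant sign on $\p G$. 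Indeed, normalize $g$ so that $g<0$ on $G$ and $g>0$ on $\R^2\setminus\overline G$; this is legitimate because $G=\mathrm{int}\,\overline G$ forces each arc of $\p G$ to have $G$ on one side and $\R^2\setminus\overline G$ on the other, while indefiniteness of each $P_j$ makes $g$ change sign across every arc, so the two proper $2$-colorings of the complement of $\p G$ (``inside/outside'' and ``$\mathrm{sign}\,g$'') agree up to a global swap. Then $\partial_\nu g=|\nabla g|\ge0$ a.e.\ on $\p G$, and the displayed identity forces $p=0$ $\sigma$-a.e., hence $p\equiv0$, on $\p G$. By the real Study lemma recalled above, $p\in I(\p G)=(g)$; since $\deg p\le d=\deg g$ this gives $p=c\,g$ for a constant $c$. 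Thus $\ker M^{(1)}=\R\mathbf g$, which closes the argument.

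The soft parts are the resolvent/analyticity facts of Step~2 (boundedness of $\mathcal K^*_{\p G}$ on $H^{-1/2}(\p G)$, the Neumann expansion, standard for Lipschitz boundaries) and the bookkeeping of the normalization. The real obstacle is Step~3, and within it the sign statement for $\partial_\nu g$ — which is precisely where the structural hypotheses $G\in\mathcal G^*$ and indefiniteness of the $P_j$ are needed — together with the invocation of Study's lemma to upgrade vanishing on $\p G$ to membership in the principal ideal $(g)$.
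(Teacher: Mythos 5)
Your proposal is correct and follows essentially the same route as the paper: establish membership of $\mathbf g$ in the kernel, pass to $\mu=\lambda^{-1}=0$ where the kernel of the leading matrix $\mathbb M(0)=M^{(1)}$ is computed by testing against polynomial multiples of $g$ (your $q=gp$ versus the paper's $q=x^\alpha g$ followed by $q=p$), and then propagate maximal rank to all but a discrete set of $\lambda$ by analyticity (you via non-vanishing minors, the paper via a left inverse and Steinberg's theorem — equivalent devices). Your explicit two-coloring justification that $\partial_\nu g$ has constant sign is in fact a more careful account of the step the paper simply asserts as $\nu=\nabla g/\|\nabla g\|$.
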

 %%%%%%%%%%%
 %%%%%%%%%%%%%%%%%%%%%%%%
\proof 

The proof of the theorem has two main steps. In the first step, we  show that ${\bf g}$ satisfies 
the normalized linear system \eqref{gg1}.  The second step consists in proving  that it is indeed the unique solution
to that system.
\begin{itemize}
\item[Step 1.]
For $|\lambda|> \frac{1}{2}$, recall  from \eqref{gpt1} the general expression of the GPTs 
\begin{equation} \label{gpt}
 M_{\alpha\beta}(\lambda, G) :=  \int_{\p G}  (\lambda I- \mathcal{K}^*_{\p G})^{-1} \left [\nu(x)\cdot\nabla x^\alpha\right ] y^\beta\,d\sigma(y).
 \end{equation}
 Since $g(x)$ vanishes on $\p G$, we infer 
 \begin{equation} \label{gg}
 \int_{\p G}  (\lambda I- \mathcal{K}^*_{\p G})^{-1} \left [\nu(x)\cdot\nabla x^\alpha\right ] g(y) \,d\sigma(y) =0, \qquad  \forall \alpha \in \N^2,
 \end{equation}
which combined with \eqref{gpt} leads to the desired system:
\bean \label{gg2}
\sum_{|\beta| \leq d}M_{\alpha\beta}(\lambda, G)g_\beta = 0, \qquad  \forall \alpha \in \N^2.
\eean
\item[Step 2.]
Assume that ${\bf p} \in \R_{r_d}[x]$ satisfies the system  \eqref{gg2}. Our objective is to prove that ${\bf p}$
coincides with  ${\bf g}$.

Denote by $\mathbb C_\star := \mathbb C \setminus (-\infty, -2]\cup [2, +\infty),$
and let 
$\mu= \lambda^{-1} \in \mathbb C_\star$. Define $\mathbb M(\mu) $ to be the rectangular matrix with
coefficients:
$(\mu M_{\alpha\beta})_{|\alpha| \leq 2d, |\beta| \leq d}$, that is 
\bean\label{gpt2}
 \mathbb M_{\alpha\beta}(\mu) :=  \int_{\p G}  ( I- \mu \mathcal{K}^*_{\p G})^{-1} \left [\nu(x)\cdot\nabla x^\alpha\right ] y^\beta\,d\sigma(y).
 \eean

Obviously,  the following equality holds \bea \label{eM}
 \mathbb M(\mu) {\bf p}= 0.
 \eea
 
 \end{itemize}
%%%%%%%%%%%%%%%%%%%%%%%%%%%
\begin{lemma} \label{holo} The function
$ \mu\rightarrow \mathbb M(\mu) \in \mathcal L\left(\mathbb R^{r_d}, 
\mathbb R^{r_{2d}} \right)$  is a holomorphic matrix-valued function on 
$\mathbb C_\star,$ and  $
\textrm{ker} (\mathbb M(0)) = \left\{c {\bf g}; \;\; c\in \mathbb R \right\}$.
\end{lemma}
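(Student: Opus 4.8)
The plan is to prove the two assertions of Lemma~\ref{holo} separately, both reducing to standard facts about the resolvent of the Neumann--Poincar\'e operator. For holomorphy, I would first recall that, by Proposition~\ref{resolvent}, the spectrum of $\mathcal{K}^*_{\p G}$ is contained in $(-1/2,1/2]$, so that $I-\mu\mathcal{K}^*_{\p G}$ is invertible on $H^{-1/2}(\p G)$ whenever $\mu^{-1}=\lambda\notin[-1/2,1/2]$, i.e. for $\mu\in\mathbb C_\star=\mathbb C\setminus\big((-\infty,-2]\cup[2,+\infty)\big)$. On $\mathbb C_\star$ the map $\mu\mapsto (I-\mu\mathcal{K}^*_{\p G})^{-1}$ is a resolvent-type family and hence is operator-norm holomorphic: locally one writes $(I-\mu\mathcal{K}^*_{\p G})^{-1}=(I-\mu_0\mathcal{K}^*_{\p G})^{-1}\big(I-(\mu-\mu_0)\mathcal{K}^*_{\p G}(I-\mu_0\mathcal{K}^*_{\p G})^{-1}\big)^{-1}$ and expands the last factor in a Neumann series convergent for $|\mu-\mu_0|$ small. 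Composing on the left with the bounded functional $\psi\mapsto\int_{\p G}\psi(y)\,y^\beta\,d\sigma(y)$ and on the right with the fixed data $\nu(x)\cdot\nabla x^\alpha\in H^{-1/2}(\p G)$, each entry $\mathbb M_{\alpha\beta}(\mu)$ is a scalar holomorphic function of $\mu$ on $\mathbb C_\star$; since there are finitely many entries, $\mu\mapsto\mathbb M(\mu)\in\mathcal L(\mathbb R^{r_d},\mathbb R^{r_{2d}})$ is holomorphic there.

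For the kernel at $\mu=0$, note $\mathbb M(0)$ is the matrix with entries $\int_{\p G}\big(\nu(x)\cdot\nabla x^\alpha\big)\,y^\beta\,d\sigma(y)$ for $|\alpha|\le 2d$, $|\beta|\le d$. Given ${\bf p}=(p_\beta)\in\mathbb R^{r_d}$ with associated polynomial $p(x)=\sum_{|\beta|\le d}p_\beta x^\beta$, the condition $\mathbb M(0){\bf p}=0$ reads $\int_{\p G}\big(\nu(x)\cdot\nabla x^\alpha\big)\,p(y)\,d\sigma(y)=0$ for all $|\alpha|\le 2d$. By the divergence theorem this equals $\int_G \Delta\!\big(x^\alpha p(x)\big)\,dx$ up to lower-order terms; more cleanly, I would use that for each fixed $\alpha$ the vector field $F=p(x)\nabla x^\alpha$ satisfies $\int_{\p G}F\cdot\nu\,d\sigma=\int_G \operatorname{div}F\,dx=\int_G\big(\nabla p\cdot\nabla x^\alpha+p\,\Delta x^\alpha\big)\,dx$. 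Running this over the full set of monomials $x^\alpha$ with $|\alpha|\le 2d$ produces a system of linear constraints on the moments of $p(x)$ against the Lebesgue measure on $G$; this is precisely the Lasserre--Putinar matrix of \cite{lp} (the degree bound $|\alpha|\le 2d$ matching the order-$3d$ moment requirement since $\deg p\le d$). Invoking the main result of \cite{lp}, whose hypotheses are guaranteed here because $G\in\mathcal G^*$ is admissible with $I(\p G)=(g)$ principal and $g$ of first-order vanishing, the space of such $p$ is exactly $\{c\,g:c\in\mathbb R\}$, i.e. $\ker\mathbb M(0)=\{c{\bf g}:c\in\mathbb R\}$.

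The step I expect to require the most care is the reduction of $\mathbb M(0){\bf p}=0$ to the moment conditions of \cite{lp} and the verification that the cited uniqueness theorem applies verbatim: one must be sure that testing only against $\nu\cdot\nabla x^\alpha$ for $|\alpha|\le 2d$ (rather than all $\alpha$) already forces $p$ to lie on the line $\mathbb R g$, and that the passage through the divergence theorem correctly recovers the moment matrix used there. The holomorphy part is routine resolvent calculus; the genuine content is this identification with the Lasserre--Putinar construction, together with checking that admissibility of $G$ supplies exactly the nondegeneracy hypothesis under which that construction yields a one-dimensional kernel spanned by the minimal polynomial $g$.
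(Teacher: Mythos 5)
Your holomorphy argument is fine and is essentially the paper's: both reduce to the fact that $\mu\mapsto(I-\mu\mathcal K^*_{\p G})^{-1}$ is analytic on $\mathbb C_\star$ (the paper uses the Neumann series $\sum_j\mu^j(\mathcal K^*_{\p G})^j$ together with its holomorphic continuation to the resolvent set; your local resolvent expansion around each $\mu_0$ is an equivalent way of saying the same thing).

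The kernel computation, however, has a genuine gap. After correctly rewriting $\mathbb M(0)\mathbf p=0$ as $\int_{\p G}p\,(\nu\cdot\nabla x^\alpha)\,d\sigma=0$ for $|\alpha|\le 2d$, you pass to volume integrals and assert that the resulting system ``is precisely the Lasserre--Putinar matrix,'' so that its kernel is $\mathbb R\,\mathbf g$ by citation. It is not: the Lasserre--Putinar construction rests on constraints of the form $\int_{\p G}x^\gamma p\,\nu_i\,d\sigma=0$ (equivalently $\int_G\partial_i(x^\gamma p)\,dx=0$) for all monomials $x^\gamma$ and \emph{both} components $i$ of the normal, whereas your conditions test $p$ only against the special combinations $\nu\cdot\nabla x^\alpha=\sum_i\alpha_i x^{\alpha-e_i}\nu_i$. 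Each of your constraints is a linear combination of theirs, so your system is \emph{weaker}; knowing that their larger system has one-dimensional kernel gives $\mathbb R\,\mathbf g\subseteq\ker\mathbb M(0)$, which is the trivial inclusion, and says nothing about the reverse one. Already $x_2\nu_1$ does not lie in the span of $\{\nu\cdot\nabla x^\beta\}$ (only the sum $x_2\nu_1+x_1\nu_2=\nu\cdot\nabla(x_1x_2)$ does), so the two test families genuinely differ. What closes the argument in the paper is a direct boundary computation that never leaves $\p G$: since $I(\p G)=(g)$ and $g$ vanishes to first order, one has $\nu=\nabla g/\|\nabla g\|$ on the regular locus of $\p G$; choosing the test polynomials $q=x^\gamma g$ with $|\gamma|\le d$ (this is exactly why the range $|\alpha|\le 2d$ is needed) and using $g=0$ on $\p G$ gives $\nu\cdot\nabla q=x^\gamma\|\nabla g\|$ there, hence $\int_{\p G}\|\nabla g\|\,x^\gamma p\,d\sigma=0$ for all $|\gamma|\le d$; taking the combination corresponding to $q=p$ then yields $\int_{\p G}\|\nabla g\|\,p^2\,d\sigma=0$, so $p\equiv 0$ on $\p G$, and $p=c\,g$ follows from principality of the ideal and the degree bound $\deg p\le d$. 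Some such positivity and first-order-vanishing argument is indispensable here; the appeal to \cite{lp} cannot substitute for it.
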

%%%%%%%%%%%%%%%%%%%%%%%%%%%%

\proof
Considering the properties of the resolvent set in Proposition~\ref{resolvent}, $\mu \mathcal{K}^*_{\p G}$ is a 
contraction operator for $\mu$ small enough.  In fact it can be easily verified that 
$\| \mathcal{K}^*_{\p G}\| \leq 1/2$ where the norm of $\mathcal{K}^*_{\p G}$ is taken in the energy space~\cite{KhavisonPutinarShapiro}.  Moreover, the Neumann series 
$$
 ( I- \mu \mathcal{K}^*_{\p G})^{-1} =  \sum_{j=0}^\infty \mu^j (\mathcal{K}^*_{\p G})^j,
 $$
has a holomorphic extension in  the resolvent set $\mathbb C_\star$. Next, we investigate the kernel of  $\mathbb M(0)$.
More precisely, $\mathbb M(0){\bf p}= 0$ is equivalent to 
\bean\label{gpt0}
\int_{\p G}  \nu(y) \cdot\nabla q(y)  p(y) \,d\sigma(y) = 0,  \quad \forall q \in  \R_{d}[x].
 \eean
Since $g$  generates the ideal associated to $\partial  G$ and $\p G$ is Lipschitz, we have 
 $\nu(x) = \frac{\nabla g}{\|\nabla g\|}$ on the regular part of the curve $\p G$.
 Then \eqref{gpt0} becomes 
\bea
\int_{\p G}  \nabla g(y) \cdot\nabla  q(y)  p(y)   \frac{1}{\|\nabla g\|}  \,d\sigma  = 0,  \quad \forall  q \in  \R_{d}[x].
 \eea
 
By taking $q(x)= x^\alpha g(x)$ for $\alpha \in \mathbb N^2$ satisfying $|\alpha| \leq  d$, and considering the
fact that $g(x)$ vanishes on $ \partial G$, one finds 
 
 \bea
\int_{\p G}\|\nabla g\| y^\alpha p(y)  \,d\sigma = 0,  \quad \forall  \alpha \in \mathbb N^2, \; \; |\alpha| \leq  d,
 \eea
 which in turn implies that

 \bea
\int_{\p G}\|\nabla g\|  q(y) p(y)  \,d\sigma = 0,  \quad \forall  q \in  \R_{d}[x].
 \eea
 Then, taking $q=p$ in the last inequality gives $p(y) = 0$ on $\partial G$. 
 Consequently, $p = c g$ for some real  constant $c$, which is the
 desired result.
\endproof

At this point we return to the proof of the main theorem. The analytic family of matrices $M(\mu)$ annihilates $g$ for all values of the parameter $\mu$. Moreover, for 
$\mu =0$ we saw that $M(\mu)$ has maximal rank, that is its kernel is spanned by $g$. Since maximal rank
is constant on a Zariski open subset of the parameter domain, we infer that 
$$ \dim \textrm{ker} M(\mu) = 1$$
for all $\mu$ in {$\mathbb C_\star$, except a discrete subset $\widetilde \Sigma$.}

 We provide some details of the proof for the convenience of the general readership. Let $\mathfrak H = \left\{c {\bf g}; \;\; c\in \mathbb R \right\}^\perp$  be  the sub-vector  
 space in  $\mathbb R^{r_d}$ orthogonal to $\textrm{ker} (\mathbb M(0))$. 
 Denote the restriction of $\mathbb M(\mu)$ to $\mathfrak H$ by
 $ \widetilde{\mathbb M}(\mu) \in \mathcal L(\mathfrak H, \R_{r_{2d}}) $. Since the Hilbert
 space $\mathfrak H $
 is $\mu$ independent, the matrix-valued function $\mu\rightarrow \widetilde{\mathbb M}(\mu)$ 
 inherits the same regularity as the function $\mu\rightarrow \mathbb M(\mu)$, i.e., it is holomorphic 
 on $\mathbb C_\star$. Then,  as a direct consequence of 
Lemma~\ref{holo}, the restriction of $\mathbb M(0)$ to $\mathfrak H$, denoted by
 $ \widetilde{\mathbb M}(0) \in \mathcal L(\mathfrak H, \R_{r_{2d}})$, is injective. Recall that 
a linear bounded operator is injective if and only if  it has a left inverse~\cite{Brezis}.
Then, there exists a left inverse denoted  by $L_0\in \mathcal L(\R_{r_{2d}}, \mathfrak H ) $,
that only depends on $\p G$ satisfying 
\bea
L_0  \widetilde{\mathbb M}(0)  = I_d,
\eea
where $I_{\mathfrak H}$ is the identity matrix acting on $\mathfrak H$.
Now, let
$$
\begin{array}{llcc}
\mathfrak H \longrightarrow  \mathfrak H\\
\mathbb T(\mu) =  L_0 \widetilde{\mathbb M}(\mu).
\end{array}
$$
Then, $\mu \rightarrow \mathbb T(\mu)$ is holomorphic on  $\mathbb C_\star$, and by construction 
it verifies  $\mathbb T(0) = I_{\mathfrak H}.$ Thus, we deduce from Steinberg Theorem  that 
$\mathbb T(\mu)$ is invertible everywhere on $\mathbb C_\star$  except at  a discrete set of values
$\widetilde \Sigma$~\cite{Kato}. Hence, $L_0$ becomes the left inverse of the 
 matrix $\widetilde{\mathbb M}(\mu)$  for all $\mu \in \mathbb C_\star \setminus \widetilde \Sigma$.
  Consequently,    $\widetilde{\mathbb M}(\mu)$   is injective 
  for all $\mu \in \mathbb C_\star \setminus \widetilde \Sigma$. 
  Since  $ \widetilde{\mathbb M}(\mu)$
  is the restriction of $\mathbb M(\mu)$ to $\mathfrak H$  which is the orthogonal space 
  to the vector $g$ in $\mathbb R^{r_d}$, we obtain that $\textrm{ker} (\mathbb M(\mu)) 
  = \left\{c {\bf g}; \;\; c\in \mathbb R \right\}$ for all $\mu \in \mathbb C_\star \setminus \widetilde \Sigma$.
   Then, for $\mu \in \mathbb C_\star \setminus \widetilde \Sigma$, \eqref{eM} implies
  ${\bf p} = c {\bf g} $ for some real constant $c$. Using the normalization condition, we obtain ${\bf p}={\bf g}$,
  and hence $p= g$ for all $\lambda \in \mathbb C\setminus[-1/2, 1,2]$ outside the set  
  $\Sigma := \{\lambda \in \mathbb C;\;\lambda^{-1} \in  \widetilde \Sigma \}$.
  
\endproof

%%%%%%%%%%%%%%%%%%%%%%%%%%%%%%%%%
\begin{remark} 
We propose here a direct proof of  the
uniqueness result  in  Theorem~\ref{thm1} in the case where 
all the  GPTs of the
algebraic domain $G$ are known. In \cite{ak03}, the proof of uniqueness for general
shapes is based  on the relation between
the far-field expansion and the Dirichlet-to-Neumann operator. 

Let ${\bf p^e}$ be  the extension of the 
vector ${\bf p} \in \R_{r_d}[x]$ by zero in  
the canonical basis $x^\alpha, |\alpha| > d$.  
Assume that ${\bf p} \in \R_{r_d}[x]$ satisfies the system  \eqref{gg1},
then 
\bea
\sum_{\beta}M_{\alpha\beta}(\lambda, G)p_\beta^e = 0, \qquad  \forall \alpha \in \N^2,
\eea
and consequently, 
\bea
 \int_{\p G}  (\lambda I- \mathcal{K}^*_{\p G})^{-1} \left [\nu(x)\cdot\nabla x^\alpha\right ]
 p(y) \,d\sigma(y) =0, \qquad  \forall \alpha \in \N^2.
\eea
Similarly, we have 
\bea
 \int_{\p G}  \nu(y)\cdot\nabla y^\alpha
  (\lambda I- \mathcal{K}_{\p G})^{-1} \left[p(x)\right] \,d\sigma(y) =0, \qquad  \forall \alpha \in \N^2, 
\eea
where $\mathcal{K}_{\p G}: \, H^{1/2}(\partial D) \rightarrow H^{1/2}(\partial D)$ denotes the adjoint of $\mathcal{K}^*_{\p G}$. 
Since $\Gamma (z-y)$ has the following expansion 
\bea
\Gamma (z-y) = \sum_{|\alpha|=0}^{\infty}\frac{(-1)^{|\alpha|}}{\alpha!}\p^\alpha 
\Gamma(z)y^\alpha, \quad  y\in \overline G, \quad |z| \rightarrow +\infty,
\eea
we deduce that
\bea
 \int_{\p G}  \nu(y)\cdot\nabla \Gamma (z-y)
  (\lambda I- \mathcal{K}_{\p G})^{-1} \left[p(x)\right] \,d\sigma(y) =0, \qquad  \forall z\in \R^2\setminus 
  \overline G,
\eea
which implies that
 \bea
 (\lambda I- \mathcal{K}_{\p G})^{-1} \left[p(x)\right] = 0 \qquad \textrm{on } \partial D.
\eea
Since $|\lambda|> \frac{1}{2}$, $\lambda I- \mathcal{K}_{\p G}$ is invertible and so
 $p$ vanishes completely on $\p G$. Consequently, $p = c g$ for some real constant $c$.
Using the normalization condition, we obtain $p=g$.
\end{remark}
%%%%%%%%%%%%%%%%%%%%%%%%%%
{Following the discussion in Section~\ref{sec3}, we need to add a supplementary criteria 
to be able to identify uniquely the domain $G$ from its minimal polynomial $g$. Let  $\Omega_r$
be a bounded domain in $\mathbb R^2$, containing a ball of center zero and radius $r>0$ large
enough. Let $\mathcal G^*_0$ be the set of polynomial functions $g \in \mathcal G^*$ such 
that there exists a unique  Lipshitz algebraic domain $G$ containing zero and with size one satisfying
$\p G \subset \{g=0\}\cap \Omega$.}

%%%%%%%%%%%%%%%%%%%%%%%%%%%%
\begin{corollary} \label{main2}
Let $G \in \mathcal G^*_0,  \widetilde G \in \mathcal G^*$  with respectively  $\p G$ and $\p \widetilde G$ of degree $d$. Let  $g(x)=  
\sum_{ |\alpha| \leq d}g_\alpha x^\alpha $ and $ \tilde g(x)=  \sum_{ |\alpha| \leq d}\tilde g_\alpha x^\alpha$ be  
respectively  polynomial functions  that vanish respectively  of the first-order  on $\p G$,  and $\p \widetilde G$ 
satisfying $I(\p G) = (g),\; g_{\alpha^*} =1$, \;$g(0)=0$,\, and\, $I(\p \widetilde G) = (\tilde g),\; \tilde g_{\alpha^*} =1,$
\; $\tilde g(0) = 0$. 
Let $\lambda   $  be fixed  in $\mathbb C_0$ such that $\lambda \notin \Sigma$,
where the set $\Sigma(\partial G)$ is as defined in Theorem~\ref{main}.
 Then,  the following uniqueness result holds:
\bean \label{uniqueness}
(M_{\alpha\beta}(G, \lambda))_{|\alpha| \leq 2d, 0<|\beta| \leq d} = (M_{\alpha\beta}(\widetilde G, 
\lambda))_{|\alpha| \leq 2d, 0<|\beta| \leq d}
\;\;\;\; \textrm{iff}\;\;\;\; G= \widetilde G.
\eean
 
\end{corollary}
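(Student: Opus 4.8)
The forward implication is trivial, so only the converse requires work; the plan is to first recover the common minimal polynomial from the common GPT data by invoking Theorem~\ref{main}, and then to recover the domain itself from that polynomial by invoking the defining property of $\mathcal G^*_0$. So assume $(M_{\alpha\beta}(G,\lambda))_{|\alpha|\le 2d,\,0<|\beta|\le d}=(M_{\alpha\beta}(\widetilde G,\lambda))_{|\alpha|\le 2d,\,0<|\beta|\le d}$. Since $\lambda\in\mathbb C_0$, Proposition~\ref{resolvent} shows that $(\lambda I-\mathcal K^*_{\p\widetilde G})$ is invertible, so that the GPTs $M_{\alpha\beta}(\lambda,\widetilde G)$ and the densities $\phi_\alpha$ attached to $\widetilde G$ are well defined; then Step~1 in the proof of Theorem~\ref{main} applies verbatim to $\widetilde G$, because $\tilde g$ vanishes on $\p\widetilde G$, and yields $\sum_{|\beta|\le d}M_{\alpha\beta}(\lambda,\widetilde G)\,\tilde g_\beta=0$ for every $\alpha\in\N^2$. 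No genericity of $\lambda$ is used in this step.

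Next I would restrict these identities to $|\alpha|\le 2d$ and substitute the assumed equality $M_{\alpha\beta}(\lambda,\widetilde G)=M_{\alpha\beta}(\lambda,G)$: this shows that the coefficient vector $\tilde{\bf g}=(\tilde g_\alpha)\in\R_{r_d}$ solves the homogeneous system $\sum_{|\beta|\le d}M_{\alpha\beta}(\lambda,G)\,p_\beta=0$, $|\alpha|\le 2d$, and by hypothesis it also satisfies the normalization $\tilde g_{\alpha^*}=1$ with $\alpha^*=\max_{\tilde g_\alpha\ne 0}\alpha$. Since $\lambda\notin\Sigma$, with $\Sigma=\Sigma(\p G)$ the discrete set of Theorem~\ref{main}, that theorem asserts that ${\bf g}$ is the \emph{unique} solution of this normalized linear system; hence $\tilde{\bf g}={\bf g}$, i.e. $\tilde g=g$ in $\R_d[x]$. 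Equivalently, $g$ and $\tilde g$ both span the one-dimensional kernel of $\mathbb M(\lambda^{-1})$, and the common normalization pins the proportionality constant down to $1$.

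It then remains to pass from $\tilde g=g$ to $\widetilde G=G$, which is exactly where the hypothesis $G\in\mathcal G^*_0$ is used. The domain $\widetilde G$ is admissible ($\widetilde G=\inter\overline{\widetilde G}$), Lipschitz, and of size of order one, so $\widetilde G\subset\Omega_r$ for $r$ large; its boundary passes through the origin (using $\tilde g(0)=0$ and the standing assumption that boundaries contain the origin), and $\p\widetilde G\subset\{\tilde g=0\}=\{g=0\}$. By the very definition of $\mathcal G^*_0$ there is only one Lipschitz algebraic domain with these properties, and $G$ is one such domain; therefore $\widetilde G=G$.

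The only steps needing genuine verification are that Step~1 of Theorem~\ref{main} transfers unchanged to $\widetilde G$ and that the normalizations of $g$ and $\tilde g$ are compatible with the one built into \eqref{gg1}. The single delicate point---and the reason the restricted class $\mathcal G^*_0$ cannot be dispensed with---is the last step: as the admissible domains $G_1,\dots,G_4$ in \eqref{g4b} illustrate, a minimal defining polynomial may be shared by several distinct admissible domains, so without the assumption $G\in\mathcal G^*_0$ the statement is false, and selecting the correct chamber would require information beyond the listed GPTs. Beyond this I anticipate no analytic obstacle past what is already handled in the proof of Theorem~\ref{main}.
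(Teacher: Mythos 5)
Your proposal is correct and follows the same route as the paper's own (much terser) proof: apply the annihilation identity of Step~1 of Theorem~\ref{main} to $\widetilde G$, substitute the equality of GPTs to see that $\tilde{\bf g}$ solves the normalized system attached to $G$, invoke the uniqueness statement of Theorem~\ref{main} (valid since $\lambda\notin\Sigma(\p G)$) to get $\tilde g=g$, and conclude $G=\widetilde G$ from the defining property of $\mathcal G^*_0$. The additional details you supply—in particular that no genericity of $\lambda$ is needed for the annihilation step, and that the chamber ambiguity is exactly what $\mathcal G^*_0$ is designed to remove—are faithful elaborations of what the paper leaves implicit.
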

\begin{proof}
The result is a direct consequence of Theorem~\ref{main}. Since the generalized polarization tensors coincide, 
and $\lambda \notin \Sigma$, we can deduce from Theorem~\ref{main} that $g= \tilde g$. Since $g\in \mathcal G^*_0$ 
we finally obtain  $G=\widetilde G$.
\end{proof}

%%%%%%%%%%%%%%%%%%%%%%%%%%
\begin{remark}
From applications point of view, the assumption  $G \in \mathcal G^*_0$  in Corollary  \ref{main2},  is  
somehow related to the fact that in the inverse problem of identifying small inclusions from boundary voltage measurements, the location and the convex hull   
are well determined \cite{ak11}. Our method will allow the recovery of the shape up to a certain  precision 
fixed by the highest order of the considered  GPTs. The approach  can be seen as an extension of the equivalent 
 ellipse  approach \cite{AmmariKangb1}.  The assumption $G \in \mathcal G^*_0$ can be dropped when 
 the regular locus of $\p G$ coincides with  $\p G$. For example, when $G$ is a lemniscate
with a large enough level set constant  such that $\p G$ contains all the
complex roots of  its associated complex polynomial (for $r$ large enough in \eqref{lemniscate}). 
 
\end{remark}
%%%%%%%%%%%%%%%%%%%%%%%%%%%%

\begin{remark}
In Theorem~\ref{main}, $\lambda$ is used implicitly. However, if the domain $G$ is sufficiently well approximated by $\tilde{G}$ then one can also attempt to recover $\lambda$ by solving the following minimization problem:
$$
\tilde{\lambda} = argmin_{}\|M_{\alpha\beta}(\lambda, G)-M_{\alpha\beta}(\tilde{\lambda}, \tilde{G})\|~;~\|M_{\alpha\beta}\| \neq 0 .
$$
\end{remark}

%%%%%%%%%%%%%%%%%%%%%%%%%
\section{Approximation by algebraic domains} \label{sec6}
%%%%%%%%%%%%%%%%%%%%%%%%%

For possibly non algebraic boundaries $\p G$, we describe a simple procedure to compute 
a polynomial $g$ whose level set $\{x : g(x) = 0\}$ approximates $\p D$. We expect better approximation to be found among higher degree polynomials. Domains enclosed by real algebraic curves (henceforth simply called algebraic domains) are dense, in Hausdorff metric among all planar domains. A very particular case is offered by domains surrounded by a smooth curve. They can be approximated by a sequence of algebraic domains. This observation turns algebraic curves into an efficient tool for describing shapes \cite{FatemiAminiVetterli, KerenCooperSubrahmonia, TaubinCukiermanSulliven}. Note that an algebraic domain which in addition {\it is the sub level set of a polynomial of degree $d$} can be determined from its set of two-dimensional moments of order less than or equal to $3d$ \cite{ lp}. On a related topics, an exact reconstruction of quadrature domains for harmonic functions was proposed in \cite{GustafssonMilanfarPutinar} with the advantage of providing a potential type function (similar to the ubiquitous barrier
method in global optimization), which detects the boundary of any complicated shape without having to make a choice among different potential chambers. More details about the approximation theory concepts related to this framework can be found in \cite{GP}.

{Theorem~\ref{main} suggests a strategy to approximately recover information on the boundary $\p D$ when the latter is not algebraic. We will follow the approach developed  in  \cite{lp} with interior moments. By  considering the GPTs 
$(M_{\alpha\beta})_{|\alpha| \leq 2d, |\beta| \leq d}$, one may compute the polynomial $g\in \R_{d}[x]$ with coefficients ${\bf g} \in \R_{r_d}$ such that ${\bf g}$ is  the most suitable singular vector corresponding to the  smallest in absolute value singular value of $(M_{\alpha\beta})_{|\alpha| \leq 2d, |\beta| \leq d}$. }

% Let $P$ denote a polynomial in one complex variable with 
%complex coefficients (of degree at least 1) and $M$ a sufficiently large positive number that the curve 
%$\Gamma = \{z; |z| = M\}$ encloses all the roots of $P$. \\

Building on this, we also suggest an algorithm for shape recognition. The algorithm has three steps: (i) recovering $\textbf{g}$ for some degree $d$ polynomial; (ii) checking to see if the recovered polynomial has bounded level set, and (iii) accounting for scaling and rotations via minimization problem.

We stress that not all simple real algebraic sets in ${\mathbf R}^2$, such as a triangle, are the sublevel set of a single polynomial. The reader should be aware that the algorithms below identify the minimal polynomial $g$ vanishing on the boundary of an admissible planar domain $G$, but by no means this implies $$ G = \{ x \in {\mathbf R}^2; \ g(x) >0 \}.$$ Even worse, our numerical procedure of
plotting the potential boundary of $G$ is based on a curve selection process, which may not detect in a single shot all irreducible components of $\p G$. The intricate details of amending these weak points of our numerical schemes and experiments will be addressed in a forthcoming article.

 We start by showing how orthogonal transformations on the domain relate to linear operation on the underlying coefficients. We use this relation to define a minimization scheme for finding the similarity between reference and target polynomials. Start by writing the algebraic domain as follows:
$$
G :=\{x \in \mathbb{R}^2~:~ g(x) = \textbf{x}^t_{[k]}\textbf{g}_{[k,k]}^2\textbf{x}_{[k]}+\sum_{j=0}^{2k-1}\textbf{g}_{[j]}\textbf{x}_{j}=\sum_{j=0}^{2k}\textbf{g}_{[j]}\textbf{x}_{[j]} = 0 \},
$$
where $g$ is a polynomial of even degree $d =2k$ (necessary) written as a sum of polynomial forms (homogeneous) and the superscript $t$ denotes the transpose.  With the notation explained by 
\begin{definition} Let
$\textbf{x}_{k}^t := [x_1^{k},x_1^{k-1}x_2^{1},...,x_1^{1}x_2^{k-1},x_2^{k}]$, $\textbf{g}_{[k,k]}$ be a $(k+1)-by-(k+1)$ matrix, and $\textbf{g}_{[j]}$ be a row vector of $j+1$ real coefficients.  Note that $\textbf{g}_{[2k]}$ represents the coefficients of the leading form and $\textbf{g}_{[k,k]}$ is its quadratic form\cite{TaubinCukiermanSulliven}.
\end{definition}
The ability to write any polynomial in this form is a consequence of Euler's theorem \cite{TaubinCukiermanSulliven}. This formulation is used in \cite{TaubinCukiermanSulliven} to prove the following results. 
\begin{lemma}\label{boundedpoly} 
If $\textbf{g}_{[k,k]}$ is non-singular, then $G$ is bounded and non-empty. 
\end{lemma}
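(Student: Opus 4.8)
The plan is to argue by contradiction: if $G$ were unbounded, there would be a sequence of points $x^{(n)} \in G$ with $|x^{(n)}| \to \infty$; passing to a subsequence we may assume $x^{(n)}/|x^{(n)}| \to \omega$ for some unit vector $\omega \in \mathbb{R}^2$. The key observation is that along such a sequence the polynomial $g$ is dominated by its leading form. Writing $t_n = |x^{(n)}|$ and $u^{(n)} = x^{(n)}/t_n$, we have
\[
g(x^{(n)}) = t_n^{2k}\, \mathbf{u}^{(n)\,t}_{[k]}\,\mathbf{g}_{[k,k]}^2\,\mathbf{u}^{(n)}_{[k]} + \sum_{j=0}^{2k-1} t_n^{j}\, \mathbf{g}_{[j]}\,\mathbf{u}^{(n)}_{[j]},
\]
since each homogeneous form of degree $j$ evaluated at $x^{(n)} = t_n u^{(n)}$ scales as $t_n^{j}$ times its value at $u^{(n)}$. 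Dividing by $t_n^{2k}$ and letting $n \to \infty$, the lower-order terms vanish (they carry negative powers of $t_n$ and bounded coefficients, since $|u^{(n)}|=1$), so $g(x^{(n)})/t_n^{2k} \to \mathbf{w}^{t}_{[k]}\,\mathbf{g}_{[k,k]}^2\,\mathbf{w}_{[k]}$, where $\mathbf{w}_{[k]}$ is the limiting vector of monomials of degree $k$ in the components of $\omega$.

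Next I would use the hypothesis that $\mathbf{g}_{[k,k]}$ is non-singular. The leading form of $g$ is the quadratic form $q(x) = \mathbf{x}^{t}_{[k]}\,\mathbf{g}_{[k,k]}^2\,\mathbf{x}_{[k]}$ in the lifted variables $\mathbf{x}_{[k]}$; because it appears as $\mathbf{g}_{[k,k]}^2$, i.e.\ the square of an invertible matrix, this form is positive definite in $\mathbf{x}_{[k]}$. In particular, for any unit vector $\omega$ the vector $\mathbf{w}_{[k]}$ of degree-$k$ monomials is nonzero (its first or last entry is $\omega_1^k$ or $\omega_2^k$, not both zero), hence $\mathbf{w}^{t}_{[k]}\,\mathbf{g}_{[k,k]}^2\,\mathbf{w}_{[k]} > 0$. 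Therefore $g(x^{(n)}) \to +\infty$, contradicting $g(x^{(n)}) = 0$ for all $n$. This shows $G$ is bounded. For non-emptiness, note that $g$ is a nonconstant polynomial of even degree whose leading form is positive definite on the sphere, so $g$ attains arbitrarily large positive values at infinity while $g(0) = 0$ by the normalization; by continuity along any ray from the origin, $g$ takes the value $0$ on a set strictly larger than $\{0\}$, and more to the point the connected components of $\{g \le 0\}$ (or the structure of $G$ as an admissible domain with $\partial G \subset \{g=0\}$) force $G \ne \emptyset$; alternatively one invokes directly that an admissible domain is by definition nonempty open with $\partial G \subset \{g = 0\}$, so only boundedness requires proof.

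The main obstacle is the bookkeeping in the first step: one must be careful that "leading form dominates" really uses only $|u^{(n)}| = 1$ and the finiteness of the coefficient vectors $\mathbf{g}_{[j]}$, and that the square $\mathbf{g}_{[k,k]}^2$ is genuinely positive semidefinite with trivial kernel exactly when $\mathbf{g}_{[k,k]}$ is invertible — a symmetric-vs-nonsymmetric subtlety, since $\mathbf{g}_{[k,k]}$ need not be symmetric, but $\mathbf{g}_{[k,k]}^2$ in the quadratic form $\mathbf{x}^{t}\mathbf{g}_{[k,k]}^2\mathbf{x}$ is implicitly symmetrized and equals $\mathbf{x}^{t}\mathbf{g}_{[k,k]}^{t}\mathbf{g}_{[k,k]}\mathbf{x} = \|\mathbf{g}_{[k,k]}\mathbf{x}\|^2$ up to the intended convention, which is $\ge 0$ and vanishes only when $\mathbf{g}_{[k,k]}\mathbf{x} = 0$, i.e.\ only at $\mathbf{x} = 0$. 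Once this positivity is pinned down, the contradiction argument closes immediately.
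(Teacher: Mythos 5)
The paper does not actually prove this lemma: it is quoted from \cite{TaubinCukiermanSulliven}, with only the remark that the homogeneous-form decomposition ``is used in \cite{TaubinCukiermanSulliven} to prove'' it. So your argument is not competing with a proof in the text but supplying one, and the route you take (along any sequence in $G$ escaping to infinity the leading form dominates, so a definite leading form forces $g\to+\infty$ and the zero set must be bounded) is the standard one and is correct. The two ``subtleties'' you flag at the end are, however, exactly the load-bearing points, and they should be promoted from caveats to explicit hypotheses. First, the positivity of the leading form genuinely requires $\mathbf{g}_{[k,k]}$ to be symmetric, or equivalently requires reading $\mathbf{g}_{[k,k]}^2$ as $\mathbf{g}_{[k,k]}^{t}\mathbf{g}_{[k,k]}$: for a non-symmetric invertible $A$ the form $\mathbf{x}^{t}A^{2}\mathbf{x}$ can be indefinite (take $A=\left(\begin{smallmatrix}1&3\\0&1\end{smallmatrix}\right)$, for which $\mathbf{x}^{t}A^{2}\mathbf{x}$ takes both signs), and the contradiction then evaporates. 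With the symmetric convention one has $\mathbf{x}_{[k]}^{t}\mathbf{g}_{[k,k]}^{2}\mathbf{x}_{[k]}=\|\mathbf{g}_{[k,k]}\mathbf{x}_{[k]}\|^{2}$, and your observation that $\mathbf{x}_{[k]}\neq 0$ whenever $x\neq 0$ (one of $x_1^{k}$, $x_2^{k}$ is nonzero) closes the boundedness argument cleanly. Second, non-emptiness really does not follow from non-singularity of $\mathbf{g}_{[k,k]}$ alone: $g(x)=x_1^{2}+x_2^{2}+1$ has an invertible (identity) leading matrix and empty zero set. Your fallback to the normalization $g(0)=0$ (imposed in Theorem~\ref{main}, and consistent with Section~\ref{sec1} where $\partial D$ is assumed to contain the origin), or alternatively to the fact that the admissible domain is given a priori as a nonempty open set with $\partial G\subset\{g=0\}$, is therefore not optional polish; it is the only available argument for that half of the statement and should be stated as an assumption.
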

\begin{lemma}\label{boundedpoly2} 
All odd degree forms have unbounded level sets.
\end{lemma}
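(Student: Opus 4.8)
The plan is to prove \textbf{Lemma~\ref{boundedpoly2}} by contradiction, exploiting the homogeneity of forms of odd degree. Suppose $f(x) = \sum_{|\alpha| = m} f_\alpha x^\alpha$ is a nonzero form of odd degree $m$ on $\mathbb{R}^2$, and suppose its level set $\{x : f(x) = 0\}$ were bounded. First I would observe that $f$ is homogeneous: $f(tx) = t^m f(x)$ for all $t \in \mathbb{R}$ and all $x \in \mathbb{R}^2$. Since $m$ is odd, $f(-x) = (-1)^m f(x) = -f(x)$, so $f$ is an odd function on $\mathbb{R}^2$.

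The key step is to locate a point on the zero set at infinity, i.e. a direction along which $f$ vanishes identically. Pick any unit vector $\omega \in S^1$ and consider the single-variable function $\varphi(t) := f(t\omega) = t^m f(\omega)$. If $f(\omega) = 0$ for some $\omega$, then $f$ vanishes on the entire line $\mathbb{R}\omega$, which is unbounded, contradicting boundedness of the level set. Otherwise $f(\omega) \neq 0$ for every $\omega \in S^1$; but then, since $f(-\omega) = -f(\omega)$ and $f$ is continuous on the connected set $S^1$, the intermediate value theorem forces $f$ to vanish somewhere on $S^1$ — a contradiction. Concretely: choose $\omega_0 \in S^1$ with $f(\omega_0) > 0$ (replacing $f$ by $-f$ if necessary); then $f(-\omega_0) < 0$; travelling along either of the two arcs of $S^1$ joining $\omega_0$ to $-\omega_0$, continuity yields an intermediate point $\omega_\ast$ with $f(\omega_\ast) = 0$. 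Hence in all cases there is a direction $\omega_\ast$ with $f(\omega_\ast) = 0$, so the whole line $\{t\omega_\ast : t \in \mathbb{R}\} \subset \{f = 0\}$ is contained in the level set, which is therefore unbounded.

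To match the statement as phrased in the excerpt, I would note that ``all odd degree forms have unbounded level sets'' is exactly the assertion that for $f$ a nonzero homogeneous polynomial of odd degree, $\{f = 0\}$ is unbounded; the zero polynomial has level set all of $\mathbb{R}^2$, which is trivially unbounded, so the claim holds without exception. I would also point out the place where this is used in context, namely that in the decomposition $g(x) = \mathbf{x}_{[k]}^t \mathbf{g}_{[k,k]}^2 \mathbf{x}_{[k]} + \sum_{j=0}^{2k-1} \mathbf{g}_{[j]} \mathbf{x}_{[j]}$ a polynomial with bounded level set must have \emph{even} top degree, since otherwise the leading form alone already dictates unboundedness of $\{g = 0\}$ asymptotically — this last refinement (from forms to general polynomials) would be handled by a homogenization/blow-up argument comparing $g$ to its leading form at large radius, but for the bare lemma as stated only the compactness-of-$S^1$ plus intermediate-value argument above is needed.

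The main obstacle, such as it is, is purely expository rather than mathematical: one must be careful to state whether ``level set'' means the zero set $\{f = 0\}$ or a sublevel set $\{f \le c\}$, and to handle the degenerate case $f \equiv 0$ cleanly. The substantive content — that an odd continuous function on the circle has a zero, hence an odd-degree form vanishes on a line through the origin — is elementary, and I do not anticipate any genuine difficulty; the proof is a two-line application of the intermediate value theorem once the homogeneity and oddness are recorded.
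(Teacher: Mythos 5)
Your argument is correct and complete. For a nonzero form $f$ of odd degree $m$ on $\mathbb{R}^2$, homogeneity gives $f(-\omega)=-f(\omega)$ on $S^1$, the intermediate value theorem produces a direction $\omega_\ast$ with $f(\omega_\ast)=0$, and then the entire line $\mathbb{R}\omega_\ast$ lies in $\{f=0\}$, so the zero set is unbounded; the degenerate case $f\equiv 0$ is also handled. Note that the paper itself does not prove this lemma at all --- it simply attributes both Lemma~\ref{boundedpoly} and Lemma~\ref{boundedpoly2} to the reference \cite{TaubinCukiermanSulliven} --- so your write-up actually supplies a self-contained elementary proof where the paper only cites. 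Your reading of ``level set'' as the zero set is the one consistent with the paper's usage (the domain $G$ is written as $\{x: g(x)=0\}$ in the surrounding text), and your closing remark about general polynomials with odd-degree leading form goes beyond what the lemma asserts but correctly identifies how the lemma is used alongside Lemma~\ref{boundedpoly} in the shape-matching algorithm.
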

In order to recover shapes, we must first define what it means for two shapes to be the same. We avoid the difficulty of defining a similarity measure and simply state that a shape should be invariant under rotations and scaling. This invariance takes the form of a matrix operation on the underlying coefficients. Let $\mathcal{O}_s(2):=\{ A= s R \mbox{ with $R$ being a rotation matrix and $s>0$} \}$. The following result holds. 

\begin{lemma}
Consider a transformation $A=(a_{i,j})_{i,j \in \{1,2\} } \in \mathcal{O}_s(2)$. Let $x^\prime=Ax$. Then 
$$
\textbf{x}_{[d]}^\prime=A_{[d]}\textbf{x}_{[d]}
$$
and $$A(G) = \{x \in \mathbb{R}^2~:~ g(x)  =\sum_{j=0}^{2k}\textbf{g}_{[j]}A_{[j]}^{-1}\textbf{x}_{[j]} = 0 \}$$
with the $(d+1) \times (d+1)$ matrix $A_{[d]}$ having entries given by
$$
(A_{[d]})_{h,j\leq d} =\sum_{p=0}^{j}c_p(h) d_{j-p}(h)~;~c_j(h) = {{d-h}\choose{j}}a_{1,1}^{d-h-j}a_{1,2}^j ~:~d_j(h)={{h}\choose{j}}a_{2,1}^{h-j}a_{2,2}^j .
$$
Here, $c_j$ and $d_j$ are extended by zeros to be defined up to $j \leq d$.
\end{lemma}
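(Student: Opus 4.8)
The statement is a bookkeeping identity about how monomial vectors transform under a linear change of variables $x' = Ax$, together with the resulting action on the coefficient vector of $g$. The plan is to prove the displayed formula for $A_{[d]}$ first, and then deduce the formula for $A(G)$ as an immediate corollary. I would work degree-graded: since $A$ is linear, it maps the space of forms of degree exactly $d$ into itself, so it suffices to fix a single degree $d$ and exhibit the matrix of the induced map on $\textbf{x}_{[d]}$.

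\textbf{Key steps.} First I would write, for the entry of $x'$, $x'_1 = a_{1,1}x_1 + a_{1,2}x_2$ and $x'_2 = a_{2,1}x_1 + a_{2,2}x_2$, and then compute a general component of $\textbf{x}'_{[d]}$, namely $(x'_1)^{d-h}(x'_2)^{h}$ for $0 \le h \le d$. Expanding each factor by the binomial theorem,
\[
(x'_1)^{d-h} = \sum_{p=0}^{d-h}\binom{d-h}{p}a_{1,1}^{d-h-p}a_{1,2}^{p}\,x_1^{d-h-p}x_2^{p}, \qquad
(x'_2)^{h} = \sum_{q=0}^{h}\binom{h}{q}a_{2,1}^{h-q}a_{2,2}^{q}\,x_1^{h-q}x_2^{q}.
\]
Multiplying these two sums and collecting the coefficient of a fixed monomial $x_1^{d-j}x_2^{j}$ (where $j$ indexes the target monomial in $\textbf{x}_{[d]}$) forces $p+q = j$; writing $q = j-p$ gives exactly $\sum_{p=0}^{j} c_p(h)\,d_{j-p}(h)$ with $c_p(h) = \binom{d-h}{p}a_{1,1}^{d-h-p}a_{1,2}^{p}$ and $d_q(h) = \binom{h}{q}a_{2,1}^{h-q}a_{2,2}^{q}$, the convention that out-of-range binomials vanish being precisely what lets the index $p$ run freely over $0 \le p \le j$. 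This establishes $\textbf{x}'_{[d]} = A_{[d]}\textbf{x}_{[d]}$. For the second claim, note that $A(G) = \{x : g(A^{-1}x) = 0\}$ (since $x \in G \iff Ax \in A(G)$), and $g(A^{-1}x) = \sum_{j=0}^{2k}\textbf{g}_{[j]}\textbf{x}_{[j]}$ evaluated at $A^{-1}x$; applying the transformation law just proved to $A^{-1}$ gives $\textbf{x}_{[j]}(A^{-1}x) = (A^{-1})_{[j]}\textbf{x}_{[j]}(x) = A_{[j]}^{-1}\textbf{x}_{[j]}(x)$ — here one uses the functoriality $(A^{-1})_{[j]} = (A_{[j]})^{-1}$, which follows from the degree-graded nature of the construction (the map $A \mapsto A_{[j]}$ is a group homomorphism on $GL_2$, so it respects inverses). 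Substituting yields $g(A^{-1}x) = \sum_{j=0}^{2k}\textbf{g}_{[j]}A_{[j]}^{-1}\textbf{x}_{[j]}$, as claimed.

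\textbf{Where the work is.} The only genuinely delicate point is the index bookkeeping: one must check that the ranges chosen for $p$ and $q$, once extended by zero via the vanishing-binomial convention, exactly reproduce the stated single sum $\sum_{p=0}^{j}$ without double counting or omission — in particular that terms with $p > d-h$ or $j-p > h$ contribute zero automatically. This is routine but is the step most prone to off-by-one errors. The homomorphism property $A_{[d]}B_{[d]} = (AB)_{[d]}$ (needed to justify $(A^{-1})_{[d]} = A_{[d]}^{-1}$) is itself immediate once the transformation law $\textbf{x}'_{[d]} = A_{[d]}\textbf{x}_{[d]}$ is known, by applying it twice. The restriction to $A \in \mathcal{O}_s(2)$ plays no role in the proof of the formula — it is only relevant because such $A$ preserve the notion of "same shape" — so the identity in fact holds for any invertible $A$, a remark worth including.
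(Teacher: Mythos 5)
Your proof is correct and follows essentially the same route as the paper: expand $(x_1')^{d-h}(x_2')^{h}$ by the binomial theorem and identify the coefficient of $x_1^{d-j}x_2^{j}$ as the convolution $\sum_{p=0}^{j} c_p(h)\,d_{j-p}(h)$, with the vanishing-binomial convention handling the index ranges. You additionally justify the second displayed claim about $A(G)$ via the homomorphism property $(A^{-1})_{[j]} = (A_{[j]})^{-1}$, a step the paper's own proof leaves implicit; that addition is sound.
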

\begin{proof} We write
$$\textbf{x}^{\prime t}_{[d]} = [(a_{1,1}x_1+a_{1,2}x_2)^{d},(a_{1,1}x_1+a_{1,2}x_2)^{d-1}(a_{2,1}x_1+a_{2,2}x_2),...,(a_{2,1}x_1+a_{2,2}x_2)^{d}].
$$
For the $h$-th entry, we have the following:
$$
(a_{1,1}x_1+a_{1,2}x_2)^{d-h}(a_{2,1}x_1+a_{2,2}x_2)^h=\Big{(}\sum_{j=0}^{d-h}{{d-h}\choose{j}}a_{1,1}^{d-h-j}a_{1,2}^jx_1^{d-h-j}x_2^{j}\Big{)}\Big{(}\sum_{j=0}^{h}{{h}\choose{j}}a_{2,1}^{h-j}a_{2,2}^jx_1^{h-j}x_2^{j}\Big{)}
$$
$$
=\Big{(}x_1^{d-h}\sum_{j=0}^{d-h}c_jy^j\Big{)}\Big{(}x_1^{h}\sum_{j=0}^{h}d_jy^j\Big{)}= \sum_{j=0}^{d}\Big{(}\sum_{p=0}^{j}c_p(h) d_{j-p}(h) \Big{)}x_1^{d-j}x_2^{j}=: A_{[d],h}\textbf{x}_{[d]} ,
$$
where $y^j:=x_1^{-j}x_2^{j}$, $c_j(h) = {{d-h}\choose{j}}a_{1,1}^{d-h-j}a_{1,2}^j$ and $d_j(h)={{h}\choose{j}}a_{2,1}^{h-j}a_{2,2}^j$ are extended by zeros to be defined up to $j \leq d$. 
%$$
%\textbf{x'}^t_{[d]} = [\sum_{j=0}^{d}\Big{(}\sum_{p=0}^{j}c_p(0) d_p(0) \Big{)}x_1^{d-j}x_2^{j},\sum_{j=0}^{d}\Big{(}\sum_{p=0}^{j}c_p(1) d_p(1) \Big{)}x_1^{d-j}x_2^{j},...,(\sum_{j=0}^{d}\Big{(}\sum_{p=0}^{j}c_p(d) d_p(d) \Big{)}x_1^{d-j}x_2^{j}]
%$$
%As for the second claim:
%$$
%\text{Let}~ y = Ax \in A(G) := \{Ax \in \mathbb{R}^2~:~ g(x) = \sum_{j=0}^{2k}\Psi_{[j]}\textbf{x}_{[j]} = 0 \}~:~g_T(y) = \sum_{j=0}^{2k}\Psi_{[j]} A_{[j]}^{-1} A_{[j]}\textbf{x}_{[j]}= g(x) = 0
%$$
%$$
%\text{Let}~ y = A^{-1}x \in G_T := \{x \in \mathbb{R}^2~:~ g_T(x) = \sum_{j=0}^{2k}\Psi_{[j]}A_{[j]}^{-1}\textbf{x}_{[j]} = 0 \}~:~g(x) = \sum_{j=0}^{2k}\Psi_{[j]} A_{[j]}\textbf{x}_{[j]} = 
%$$
\end{proof}

%\begin{theorem}(\textbf{Various results})
%The map $A \mapsto A_{[d]} $ has the following properties:
%\begin{itemize}
%\item It is a one-to-one homomorphism of the groups of non-singular $2\times 2$ matrices $GL(2)$ into the group on non-singular $(d+1)\times (d+1)$ matrices $GL(d+1)$.
%\item For $A,B \in GL(2)$ the following holds:\begin{itemize}
%\item $(AB)_{[d]}=A_{[d]}B_{[d]}$
%\item $A_{[d]}=B_{[d]} \implies A=B$
%\item $A_{[d]}$ is non-singular and $(A_{[d]})^{-1}=(A^{-1})_{[d]}$
%\item $|A_{[d]}|=|A|^m$
%\item The map preserves: transposes, symmetry, positive definiteness, lower triangularity, diagonality and orthogonality.
%\end{itemize}
%\end{itemize}
%\end{theorem}
%\begin{proof}
%\cite{Lie Algebras and Lie Groups in Control Theory}
%\end{proof}
%$\tilde{M}:=\begin{bmatrix}
%M_{\alpha_1 \beta_1 }&M_{\alpha_1 \beta_2 }&\cdots&M_{\alpha_1 \beta_{d*(d+1)/2} }\\
%M_{\alpha_2 \beta_1 }&M_{\alpha_2 \beta_2 }&\cdots&M_{\alpha_2 \beta_{d*(d+1)/2} }\\
%\vdots&\vdots&\ddots&\vdots\\
%M_{\alpha_{{2d*(2d+1)/2}} \beta_1 }&M_{\alpha_{{2d*(2d+1)/2}} \beta_2 }&\cdots&M_{\alpha_{{2d*(2d+1)/2}} \beta_{d*(d+1)/2} }\\
%\end{bmatrix}$\\

With this result, we can first suggest a simple algorithm for matching GPTs to a predefined real algebraic shape. The goal is to recover the matrix $A$ as it gives the relation between reference and observed shapes. 

\begin{algorithm}
  \caption{Shape matching}
  \label{algo:kalman}
  \begin{algorithmic}
    \STATE {Input:} The GPTs $(M_{\alpha\beta})_{|\alpha|\leq 2d,|\beta|\leq d}$ and the coefficients $\textbf{g}$ of the reference shape.
\STATE {Procedure:} \begin{itemize}
\item Solve $\tilde{\textbf{g}}$  using Theorem~\ref{main}; 
\item Check if Lemma \ref{boundedpoly} holds for $\tilde{\textbf{g}}$;
\item  Find $\tilde{A} := argmin_{A \in \mathcal{O}_s(2)} \|\sum_{j=0}^{2k}(\tilde{\textbf{g}}_{[j]}-\textbf{g}_{[j]}A_{[j]}) \|$ and $\epsilon_{match}:= \| \sum_{j=0}^{2k}(\tilde{\textbf{g}}_{[j]}-\textbf{g}_{[j]}\tilde{A}_{[j]}) \|$. 
\end{itemize}
\STATE {Output:} $\tilde{A}$ and $\epsilon_{match}$.
\end{algorithmic}
\end{algorithm}

\begin{example}(\textbf{Shape search real algebraic domains}). The following 6 plots show real algebraic domains with degrees ranging from 6 to 2. The GPTs were calculated from algebraic domains and then  the algebraic domains were recovered again via Theorem~\ref{main}. The first shape is a reference shape, see Figure \ref{example31}. 
\begin{figure}[h]
\includegraphics[scale=0.6]{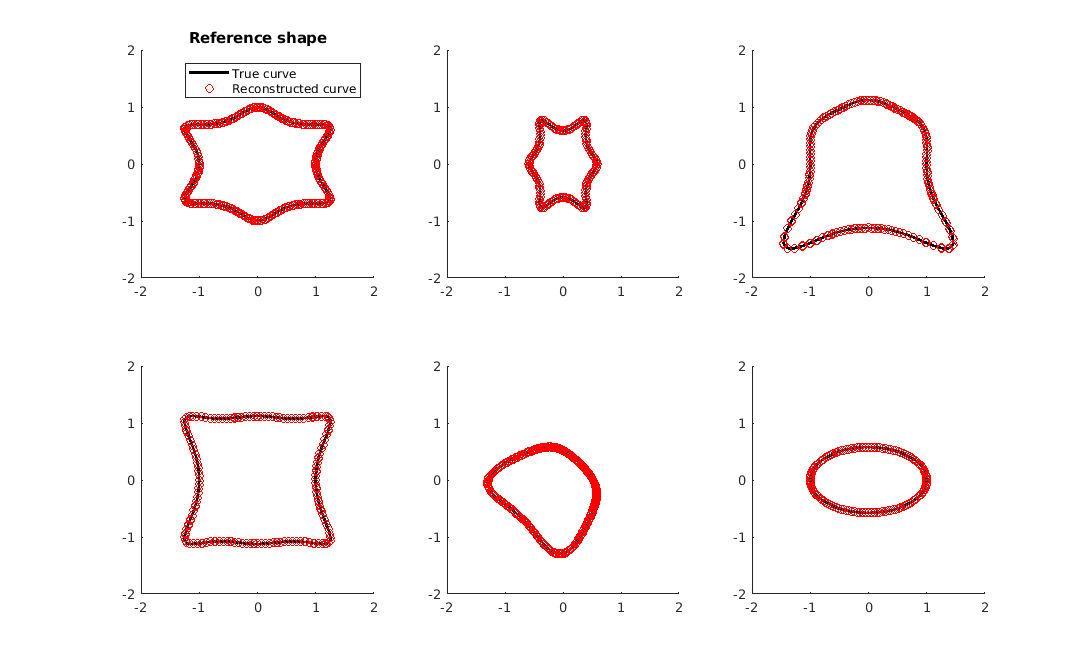}
\caption{Reconstruction of real algebraic shapes. \label{example31}}
\end{figure}

 The coefficients are minimized under rotations and scaling. The algorithm correctly matches the second shape. 
 
 %The transformation obtained was approximately a $\frac{\pi}{2}$ rotation with scaling of $\frac{1}{2}$.
\end{example}

In the general case, we propose a shape reconstruction algorithm for not necessary algebraic domains.
\begin{algorithm}
  \caption{Approximating general domains with algebraic ones}
  \label{algo:kalman2}
  \begin{algorithmic}
    \STATE {Input:} The GPTs
 $(M_{\alpha\beta})_{|\alpha| \leq 2d, |\beta| \leq d}$ obtained from a non-algebraic domain.
\STATE {Procedure:} \begin{itemize}
\item Let $\beta^{(1)},\beta^{(2)},\ldots,\beta^{(r_d)}$ be an enumeration of the $\beta$'s;
\item Let $\alpha^{(1)},\alpha^{(2)}, \ldots,\alpha^{(r_{2d})}$ be an enumeration of the $\alpha$'s;
\item Construct a matrix $[L]_{1 \leq i \leq r_{2d},   1 \leq j \leq r_{d} }:=M_{\alpha^{(i)}\beta^{(j)}}$;
\item Find the singular vector $[\tilde{g}^{(1)}, \ldots, \tilde{g}^{(r_d)}]^t$ corresponding to the smallest in absolute value singular value of $L$;
\item Set $\tilde{g}^{(j)} = \tilde{g}_{\beta^{(j)}}$.
\end{itemize}
\STATE {Output:} The coefficients $\tilde{\textbf{g}}$ of the reconstructed real algebraic shape.
\end{algorithmic}
\end{algorithm}
%Note that choosing $\tilde{g}_{[d,0]}=1$ was arbitrary and could hold for any $\beta$ of size $d$.

\begin{example}(\textbf{Approximating non-algebraic domains})
As stated earlier, algebraic domains can approximate any planar domain. However, some shapes may require an infinite series of polynomials to be described. We approximate the shapes of a triangle, a diamond and a flower with one petal missing. The GPTs were calculated from these diametrically defined shapes which in turn was used to  recover equivalent-polynomials, see Figure \ref{example32}.

\begin{figure}[h]
\includegraphics[scale=0.5]{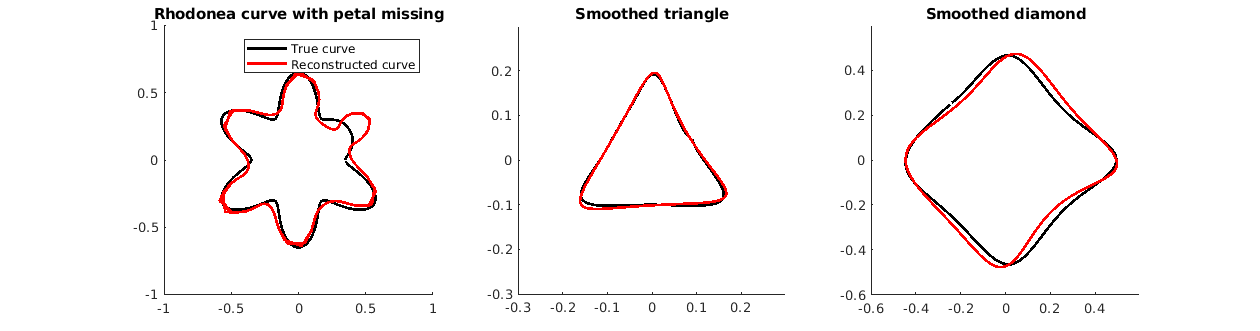}
\caption{Reconstruction of non-algebraic shapes. \label{example32}}
\end{figure}

\end{example}

\begin{remark}
Note that a higher degree polynomial may not always yield a better approximation. 
\end{remark}

\begin{example}(\textbf{Recovering non-connected real algebraic domains})
The proposed reconstruction method is powerful enough to reconstruct non-connected domains. Consider the polynomial given in (\ref{g4}). Below, we plot the real level sets of (\ref{g4b}) for values $-0.1,~0$ and $0.1$. On the $0.1$-level two components of the domain are selected and their GPTs are computed. From these GPTs, a polynomial is recovered via Algorithm \ref{algo:kalman2}. The real $0$-level set of the recovered polynomial contains curves similar to the ones  used to compute the GPTs but also  unbounded components, see Figure \ref{example33}.

\begin{figure}[h]
\includegraphics[scale=0.5]{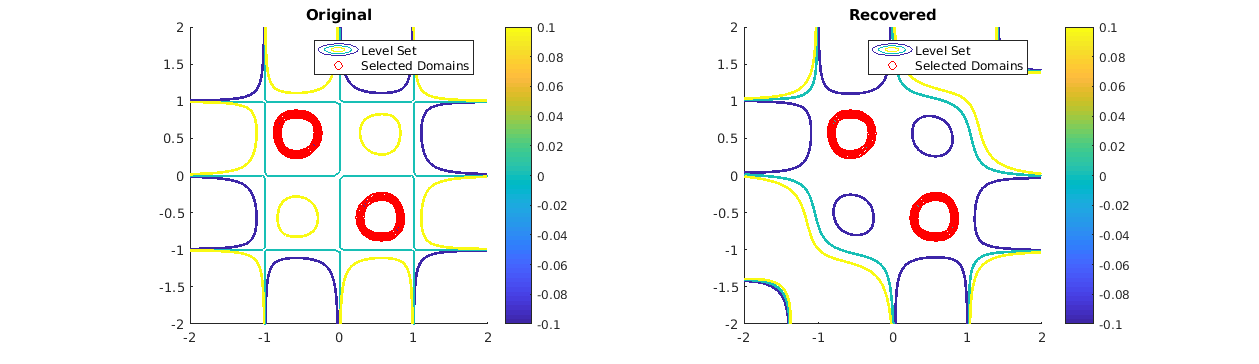}
\caption{Reconstruction of non-connected domains. \label{example33}}
\end{figure}
\end{example}

\section{Conclusion}\label{sec7}
In this paper, we have introduced a new tool for identifying shapes from finite numbers of their associated GPTs. 
We have shown in Theorem~\ref{main} and Corollary~\ref{main2} that  the knowledge of the $r_d \times r_{2d}$ GPTs 
 of a real algebraic domain having a boundary of degree $d$ is sufficient to identify it.  This is a very promising
path since it is now  possible to recover intricate  shapes  with the knowledge 
of only a finite number of GPTs.  These results confirm  that  GPTs are good  shape descriptors and can be harnessed to great effect by even simple algorithms.  
We believe that the number $r_d \times r_{2d}$ of required GPTs  can be dramatically  reduced, and this will be the subject of a future  work.

%%%%%%%%%%%%%%%%%%%%%%%%%%%%

%%%%%%%%%%%%%%%%%%%%%%%%%%%%%%%%%%%%%%

\end{document}